\documentclass[11pt]{article}
\usepackage{amssymb}
\usepackage{amsmath}
\usepackage{amsthm}
\usepackage{graphicx}
\usepackage[utf8]{inputenc}
\usepackage{amsfonts}
\usepackage{graphicx}
\usepackage{xcolor}
\usepackage{breqn}
\usepackage[utf8]{inputenc}
\textheight=8.5in \textwidth= 7in \oddsidemargin=-.20in
\topmargin=-.30in
\usepackage{float}
\usepackage{hyperref}
\usepackage{cite}
\def\bc{\begin{center}}
	\def\ec{\end{center}}

\def\s2c{\vskip 2cm}
\def\bt{\begin{Theorem}}
	\def\et{\end{Theorem}}
\def\bd{\begin{Definition}}
	\def\ed{\end{Definition}}
\def\bl{\begin{Lemma}}
	\def\el{\end{Lemma}}
\def\be{\begin{Example}}
	\def\ee{\end{Example}}
\def\bcor{\begin{Corollary}}
	\def\ecor{\end{Corollary}}
\def\br{\begin{Remark}}
	\def\er{\end{Remark}}

\def\mysection{\setcounter{equation}{0}\section}
\newtheorem{Lemma}{Lemma}[section]
\newtheorem{Theorem}[Lemma]{Theorem}
\newtheorem{Definition}[Lemma]{Definition}
\newtheorem{Proposition}[Lemma]{Propostion}
\newtheorem{Corollary}[Lemma]{Corollary}
\newtheorem{Remark}[Lemma]{Remark}
\date{}
\title{Analysis of Diffusive Size-Structured Population Model and Optimal Birth Control}
\author {Manoj Kumar, Syed Abbas$^*$\\
School of Basic Sciences,\\
Indian Institute of Technology Mandi,\\	Kamand (H.P.) - 175005, India
	\\$^*$Corresponding author's email :  sabbas.iitk@gmail.com, abbas@iitmandi.ac.in}
	
\begin{document}
	\maketitle
	\author
	\noindent {\bf Abstract} :
This work addresses the optimal birth control problem for invasive species in a  spatial environment. We apply the method of semigroups to qualitatively analyze a size-structured population model in which individuals occupy a position in a spatial environment. With insect population in mind, we study the optimal control problem which takes fertility rate as a control variable. With the help of adjoint system, we derive optimality conditions. We obtain the optimality conditions by fixing the birth rate on three different sets. Using Ekeland's variational principle, the existence, and uniqueness of optimal birth controller to the given population model which minimizes a given cost functional is shown. Outcomes of our article are new and complement the existing ones.

		\vskip .5cm \noindent {\em\bf Key Words} : Size structured population with diffusion, Optimal birth control, Ekeland's variational principle, Semigroup of operators.
	\vskip .5cm \noindent {\em \bf AMS Subject Classification}: 93C20; 47J35; 97M10; 65M25
\mysection{Introduction}

Modeling long-term evolutionary processes in the natural world is a challenging task because individuals in a population usually have a complex behavioral pattern. Furthermore, structuring natural populations in terms of size or developmental stages enhances the complexities of evolutionary models. Also, it is evident that the optimal behaviour and life characteristics of an individual should gradually alter with maturation and progression through different developmental stages or different sizes. Due to more complexities involved in structured population models as compared to unstructured population models, existing models are less developed.

 Mathematical models of population dynamics incorporating size-structure (size is a continuous variable which shows physiological or statistical characteristics of individuals, for example, size can be age, mass, diameter, length and maturity, and so on) have an extensive history. The age-structured models by Sharpe and Lotka \cite{Lotka1, Lotka2} are the earliest. Webb \cite{auger2008structured} discussed various size-structured population models with spatial structure and their analysis involves semigroup of linear and nonlinear operators. Method of semigroups considers an evolving size-structured population model as a dynamical system in an abtract space. Using semigroup of operators, N. Kato \cite{Kato2016} also studied a size-structured population model with diffusion term.

Control in a size-structured population model is the process of forcing a population through a controller to obtain a certain behaviour on it \cite{MR1797596,MR1702849}. The objective in these types of control problems is to obtain a controller that is optimal in the sense that it takes minimum cost to reduce vermin population or to increase total harvest. So, the task in these control problems is to either minimize a cost functional or maximize total harvest. For size-structured population models, these problems can be categorized either as optimal harvesting problems (optimal harvesting of natural or farmed populations such as fish or plants) or optimal control problems (optimal control of vermin or pest population).

There is enough literature available on the optimal control problems which take fertility rate as a control variable \cite{ MR1797596, MR3595204,MR2863964, MR1027051, MR1043120,NRN1,NRN2,NRN3,NRN4,NRN5,NRN6,NRN8} and also on optimal harvesting problems \cite{MR3703602,NRN9,MR2520362,MR937160}. Brokate \cite{NRN7} studied Pontryagin's principle for a general problem of optimal cotrol for an age structured population model. Also recent work on optimal behavioural strategies in structured populations is discussed in \cite{RN29}.

Motivated by the work of N. Kato \cite{Kato2016}, in this paper, we establish important estimates on mild solutions in terms of fertility rates and initial population distributions. The main focus of the N. Kato's work is to show the existence and uniqueness of mild solution, but in this work our main focus is to study the optimal birth control problem. To study optimal birth control problem, we need some estimates on mild solution and on the solution of adjoint equation. Rong et al. \cite{MR3595204} studied the least cost size and least cost deviation problem for a nonlinear size-structured population model which takes the fertility rate of vermin population as a control variable. Ze-Rong et al. in \cite{MR2863964} studied the optimal birth control problem without the diffusion term.
The technique used to find the optimality conditions is quite different from the technique used by \cite{MR3595204} and \cite{MR2863964}. The technique used by \cite{MR3595204} and \cite{MR2863964} are tangent-normal cone technique, but in our case we construct three sets and by fixing birth rate on each set, we obtain the optimality conditions. Using Ekeland's variational \cite{NRN10} principle and under some some assumptions, we show the existence and uniqueness of optimal controller as well.

This paper is divided into five sections. In section 2, we formulated the abstract form of our model with some assumptions. In section 3, we discuss some preliminary results which we will require in other sections.  In section 4, we discuss existence and uniqueness results and also derived some estimates on mild solution. The last section is devoted to the study of optimal birth control problem.

\section{Model Formulation}
Let us consider an evolving size-structured population living in a habitat $\Omega \subset \mathbb{R}^{n}$ with smooth boundary $\partial \Omega $. The spatial movement of individuals is controlled by diffusion process. Let $p(s,t,x) $ be the population density of individuals which depends on size variable $s \in [0,s_{f}]$, time $t \in[0,T]$ and spatial variable $x \in \Omega $. We assume that size of individuals increases in same manner and also individuals do not move out of domain $\Omega$ through $ \partial \Omega $. Integrating population density w.r.t. size and spatial variable at any time $t$ in the size interval $(s_{1},s_{2})$ and in a subset ${\Omega}_{1} \subset \Omega$ will give the total population $$ P(t)=\int_{s_{1}}^{s_{2}} \int_{\Omega_{1}} p(s,t,x) dx ds  .$$ Let $\gamma(s,t)$ be size and time dependent growth rate of individuals (growth rate here means a change in size with respect to time),  $\mu (s,t,x)$, $\beta (s,t,x)$ are mortality and reproduction rates respectively of size $s$ individuals at time $t$ in position $x$. Let $f(s,t,x)$ and $C(t,x)$ respectively be the number of $s$-size and zero size individuals coming from outside the domain $\Omega$ and let $r(s,t,x)$ be ratio of female individuals. For our convenience, we will use the following notations in our article $\Omega_{T} = (0,T) \times \Omega $, $\Omega_{s}=(0,s_{f}) \times \Omega $, $\Omega_{Ts}=(0,s_{f}) \times (0,T) \times \Omega$ and $\Sigma_{Ts}=(0,s_{f}) \times (0,T) \times \partial \Omega$. Now, size structured population model with diffusion can be formulated as \\

\begin{equation} \label{1.1}
\begin{cases}
  \frac{\partial p(s,t,x)}{\partial t} + \frac{\partial }{\partial s}(\gamma(s,t)p(s,t,x))= k \Delta p(s,t,x) - \mu (s,t,x)p(s,t,x)+ f(s,t,x) \quad \text{in} \quad \Omega_{Ts} \\
 \gamma(0,t)p(0,t,x)=C(t,x)+ \int_{0}^{s_{f}} r(s,t,x) \beta(s,t,x)p(s,t,x)ds \quad \text{in} \quad \Omega_{T} \\
 \frac{\partial p}{\partial \nu}(s,t,x)=0 \quad \text{in} \quad \Sigma_{Ts}, ~~~~~~~
 p(s,0,x)=p_{0}(s,x) \quad \text{in} \quad \Omega_{s}.\\
\end{cases}
\end{equation}
We assume $\beta(s,t,x)$ to be zero for those individuals who are not participating in the reproduction process. Constant $k$ is a diffusion coefficient and $\frac{\partial }{\partial \nu}$ denotes the derivative in the direction of outward normal to the boundary $\partial \Omega$. The homogeneous Neumann boundary condition shows that there is no flux of the population through the boundary of $\Omega$ and $p_{0}$ is the initial population density. Also if we take the Dirichlet boundary condition i.e. $p(s,t,x)=0$ on $\Sigma_{Ts}$ would mean individual will die as soon as they reached the boundary, which does not make sense for most of the species. So, the Neumann boundary condition is an appropriate choice. \\
 Let 
 \begin{equation} \label{n2.2}
  \mathcal{U} = \lbrace u \in L^{\infty}(\Omega_{Ts})~ |~ \phi_{l}(s,t,x) \le u(s,t,x) \le \phi_{m}(s,t,x) ~ a.e ~ \text{in} ~ \Omega_{Ts} \rbrace
  \end{equation} 
where $\phi_{l}$ and $\phi_{m}$ are non negative functions lies in $L^{\infty}(\Omega_{Ts})$. Let $p^{\beta}(s,t,x)$ be the solution of (\ref{1.1}), then our problem can be formulated as
\begin{equation} \label{min_1}
\text{minimize}~ J(\beta)= \int_{0}^{T} \int_{0}^{s_{f}} \int_{\Omega} \left[ p^{\beta}(s,t,x)-\frac{1}{2} \rho (\beta(s,t,x)^{2}) \right] ds dt dx
\end{equation}
subject to $\beta \in \mathcal{U}$ (\cite{MR2863964} have also considered similar cost functional but without diffusion term). Here, $\rho$ is a positive constant which is the weight factor of the cost to execute the control. $J(\beta)$ can be considered as an  energy function and our task is to show the existence of birth function $\beta$ which minimizes $J(\beta)$. It is clear from the energy function that if the cost of implementing control is high, then the fertility rate of invasive species will be lower. So, we want to minimize $J(\beta)$ given in (\ref{min_1}) subject to (\ref{1.1}). Some authors (\cite{MR3595204, MR1797596, MR1702849}) have also taken energy function as
\begin{dmath}
J(\beta)= \int_{0}^{T} \int_{0}^{s_{f}} \int_{\Omega} \left[ p^{\beta}(s,t,x)+\frac{1}{2} \rho (\beta(s,t,x)^{2}) \right] ds dt dx .
\end{dmath}
Same analysis can be applied to this energy function with some minor modifications.

\subsection{Abstract Formulation}
This model without spatial diffusion can be analyzed without converting it into abstract form, but to handle spatial diffusion we will convert this model into abstract form \cite{Kato2016} in an appropriate Banach space. \\
Let $A$ be realization of Laplacian in $L^{q}(\Omega)$, $q$ in $ (1, \infty)$ with the boundary conditions of Neumann type  i.e.
$$ D(A) = \left \{ v \in W^{2,q}(\Omega) \mid \frac{\partial v}{\partial \nu} = 0 \quad \text{a.e} \quad \text{on} \quad \partial \Omega \right \} $$
$$A\psi = k \Delta \psi \quad \text{for} \quad \psi \in D(A). $$ \\
It is well known that
$ A$ will generate an analytic semigroup $S(t), t \ge 0$ and there exist a positive constant $C$ and $\omega \in \mathbb{R}$, such that $\| S(t) \psi \|_{L^{q}(\Omega)} \le C \text{e}^{\omega t} \| \psi \|_{L^{q}(\Omega)}~ \text{for}~ \psi \in L^{q}(\Omega) $.\\
  For $\psi \in L^{q}(\Omega)$, let us define the operators $M$ and
  $B$ which are bounded linear  by \\
$$ [M(s,t)\psi](x) = \mu(s,t,x)\psi(x), \quad
 [B(s,t)\psi](x) = \beta(s,t,x)\psi(x)$$   $$ \text{also}~ [C(t)](x)=C(t,x),~ [f(s,t)](x)=f(s,t,x) ~\text{and}~ [p_{0}(s)](x)=p_{0}(s,x).$$
 Note that
 \begin{dmath}
 \| M \|_{L^{\infty}(S_{Tf};\mathcal{L}(Z))} \le \| \mu \|_{L^{\infty}(\Omega_{T_{s}})}, ~  \| B \|_{L^{\infty}(S_{Tf};\mathcal{L}(Z))} \le \| \beta \|_{L^{\infty}(\Omega_{T_{s}})}.
 \end{dmath}
 Above inequalities will be useful while proving existence and uniqueness of optimal control.\\
Let $Z$ be a Banach space with norm $\| \|. ~$ In abstract form our model reduces to
\begin{equation} \label{1.2}
\begin{cases}
 \frac{\partial p}{\partial t}(s,t) + \frac{\partial}{\partial s}(\gamma(s,t)p(s,t)) = [A - M (s,t)]p(s,t) + f(s,t), \quad (s,t) \in \overline{S}_{T_{f}} = [0,s_{f}] \times [0,T] \\
 \gamma(0,t)p(0,t)=C(t) + \int_{0}^{s_{f}} r(s,t) B(s,t)p(s,t)ds, \quad t \in [0,T]\\
   p(s,0)=p_{0}(s), \quad s \in [0,s_{f}].
\end{cases}
\end{equation}

Our main task here is to analyze $p(s,t)$ the $Z$ valued function, which describes the distribution of population at time $t$ with respect to size $s$.\\
We need the following assumptions to do qualitative analysis of our model:
\begin{itemize}
\item[(A1)] $\gamma \colon \overline{S}_{T_{f}} \mapsto [0, \infty) $ is continuously differentiable w.r.t both $s$ and $t$. Also $\gamma(s,t)$ is positive on $S_{T_{f}}$ i.e. size always increases after crossing initial size upto final size $s_{f}$.
We assume that the growth rate is non-negative and for each $ t \in [0,T] $, we consider the following cases:\\
(a) \quad $\gamma(0,t) >0$ and $\gamma(s_{f},t)>0$ \quad
(b) \quad $\gamma(0,t) >0$ and $\gamma(s_{f},t)=0$ \\
(c) \quad $\gamma(0,t) =0$ and $\gamma(s_{f},t)>0$ \quad
(d) \quad $\gamma(0,t) =0$ and $\gamma(s_{f},t)=0$. \\
\item[(A2)] $A$ is the infinitesimal generator of a $C_{0}$ semigroup $\lbrace S(t)~  |~  t \ge 0 \rbrace $ in $Z$ and $\| S(t) \psi \|_{Z} \le \tilde{C} \| \psi \|_{Z}$, where $\tilde{C}$ is a generic constant.
\item[(A3)] $ M, B \in L^{\infty}(S_{T_{f}} ; \mathcal{L}(Z))$, where $\mathcal{L}(Z)$ is the space of all bounded linear operators in $Z$.
\item[(A4)] $f \in L^{1}(S_{T_{f}};Z), C \in L^{1}(0,T; Z)$ and $p_{0} \in L^{1}(0,s_{f}; Z)$.
\item[(A5)] $0 < r(s,t,x) <1 ~ \forall~ (s,t,x) \in \Omega_{T_{s}}.$
\end{itemize}
We can extend the function $\gamma(s,t)$ on $\mathbb{R} \times [0,T]$ as follows: \\
Assume that the growth of individuals with size less than zero is same as size zero individuals. And the growth of individuals with size greater than $s_{f}$ is same as size $s_{f}$ individuals.

Due to assumption (A1), there exist a unique solution of the initial value problem
\begin{equation} \label{3.1}
\frac{d}{dt}s(t)=\gamma(s(t),t), \quad s(t_{0})=s_{0} \quad \text{where} \quad s_{0} \in \mathbb{R}.
\end{equation}
\begin{Remark} \label{rem}
The unique solution of  initial value problem (\ref{3.1}) is called characteristic curve of the system (\ref{1.1}).
\end{Remark}

Let us define $s(t)=\psi(t;t_{0},s_{0}), \ z_{0}(t)=\psi(t;0,0), \quad \text{and} \quad z_{1}(t)=\psi(t;T,s_{f}).$

Now, we will define the initial and final time using the idea of \cite{Kato2016} \\
\textbf{Case 1:}  For the case (A1)-(a), we have $z_{0}(t)>0$ for $t>0$ and $z_{1}(t)<s_{f}$ for $t<T$. Now, for $(s,t) \in \overline{S}_{T_{f}}$  satisfying $s \le z_{0}(t)$ there exists a unique $\tau_{0} \in [0,T]$ such that $\psi(t; \tau_{0},0)=s$. Existence of $\tau_{0}$ is due to unique solution of (\ref{3.1}) and the initial time $\tau_{0}(t,s)$ for $(s,t) \in \overline{S}_{T_{f}}$ is given by

\begin{equation} \label{ 3.2}
\begin{split}
 \tau_{0}(t,s) = \begin{cases}
\tau_{0}, & \text{if} \quad s \le z_{0}(t) \\
0, & \text{if} \quad s>z_{0}(t).
\end{cases}
\end{split}
\end{equation}
Similarly for $(s,t) \in \overline{S}_{T_{f}}$ satisfying $z_{1}(t) \le s$, there exists a unique $\tau_{1} \in [0,T]$ such that $\psi(t;\tau_{1},s_{f})=s$ and the final time is defined by
  \begin{equation} \label{3.3}
  \tau_{1}(t,s) = \begin{cases}
  \tau_{1}, & \text{if} \quad s \ge z_{1}(t) \\
  T, & \text{if} \quad s<z_{1}(t).
  \end{cases}
  \end{equation}
Similarly, we can define initial and final time for the other cases.

 \section{Preliminaries}
The definition of lower semicontinuity and other conditions for a function to achieve infimum are given in \cite{MR1797596}.\\
\textbf{Definition:}$~$  Let $Z$ be a topological space and $z_{0} \in Z$, then the function \\ $ f \colon Z \mapsto \mathbb{R} \cup \lbrace - \infty, \infty \rbrace $ is said to be \textbf{lower semi-continuous function} at $z_{0}$ if $$ \liminf_{z\rightarrow z_{0}} f(z) \ge f(z_{0}). $$
\begin{Proposition}( \cite{MR1797596}) \label{p1}
Let $Z$ be a Banach space which is also reflexive and $f \colon Z \mapsto \mathbb{R} \cup \lbrace -\infty , \infty \rbrace$ be  a lower semi-continuous convex function. If $K$ is a bounded, closed and convex subset of $Z$, then $f$ attains its infimum on $K$.
\end{Proposition}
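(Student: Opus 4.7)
The plan is to combine reflexivity with Mazur's theorem to reduce the problem to a standard weak-compactness argument, i.e. the direct method of the calculus of variations. First I would observe that boundedness of $K$ places it inside a closed ball in $Z$; by Kakutani's characterization of reflexivity, this closed ball is weakly compact. Since $K$ is convex and norm-closed, Mazur's theorem implies that $K$ is also weakly closed, and hence $K$ is weakly compact as a weakly closed subset of a weakly compact set.

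Next I would upgrade the lower semi-continuity of $f$ from the norm topology to the weak topology. For each $\lambda \in \mathbb{R}$, norm-lower semi-continuity gives that the sublevel set $\{z \in Z : f(z) \le \lambda\}$ is norm-closed, and convexity of $f$ makes this set convex. A second application of Mazur's theorem (a convex norm-closed set in a Banach space is weakly closed) shows that every sublevel set of $f$ is weakly closed, which is exactly the statement that $f$ is weakly lower semi-continuous.

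With these two ingredients in hand I would set $\alpha = \inf_{z \in K} f(z) \in \mathbb{R} \cup \{-\infty\}$ and pick a minimizing sequence $\{z_n\} \subset K$ with $f(z_n) \to \alpha$. Weak compactness of $K$, together with the Eberlein--\v{S}mulian theorem (which gives weak sequential compactness in the reflexive setting), lets me extract a subsequence converging weakly to some $z^{*} \in K$. Weak lower semi-continuity then yields $f(z^{*}) \le \liminf_n f(z_n) = \alpha$, so the infimum is attained at $z^{*}$; in particular $\alpha$ is finite unless $f \equiv -\infty$ on $K$.

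The main obstacle is the passage from norm-lsc to weak-lsc for $f$, which is not automatic for general functions and really uses convexity through Mazur's theorem; without convexity the infimum need not be attained even on weakly compact sets. Once weak lower semi-continuity is established, the rest of the argument is a routine extraction of a weak cluster point from a minimizing sequence.
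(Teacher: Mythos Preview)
Your argument is correct and is precisely the standard direct method: weak compactness of $K$ via reflexivity plus Mazur, weak lower semi-continuity of $f$ via convexity plus Mazur on sublevel sets, then extraction of a weakly convergent minimizing subsequence and passage to the limit. There is nothing to compare against, however, since the paper does not supply a proof of this proposition at all; it is simply quoted from \cite{MR1797596} as a known preliminary and used later without justification. Your write-up would in fact be a strict addition to what the paper contains.
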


\begin{Corollary} ( \cite{MR1797596}) \label{c1}
Let $Z$ be a reflexive Banach space and $f \colon Z \mapsto \mathbb{R} \cup \lbrace -\infty , \infty \rbrace$ be  a lower semi-continuous convex function such that $ \lim_{\| z \| \rightarrow z_{0}} f(z)= + \infty $, then $f$ attains its infimum on $Z$.
\end{Corollary}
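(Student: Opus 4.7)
The plan is to reduce Corollary~\ref{c1} to Proposition~\ref{p1} by restricting $f$ to a sufficiently large closed ball. Reading the hypothesis $\lim_{\|z\| \to z_{0}} f(z) = +\infty$ as the coercivity condition $f(z) \to +\infty$ as $\|z\| \to \infty$ (the symbol $z_0$ in the statement appears to be a typographical slip for $\infty$), the argument proceeds as follows.

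First, I would fix any point $z_{1} \in Z$ at which $f$ is finite (if no such point exists then $f \equiv +\infty$ and the conclusion is trivial), and set $\alpha := f(z_{1})$. By the coercivity hypothesis, there exists $R > 0$, which we may enlarge if necessary so that $R \ge \|z_{1}\|$, with the property that $f(z) > \alpha$ whenever $\|z\| > R$. It follows that the infimum of $f$ over all of $Z$ coincides with its infimum over the closed ball $K := \{z \in Z : \|z\| \le R\}$, since every point outside $K$ has value strictly greater than $\alpha \ge \inf_{K} f$, while $z_1 \in K$ witnesses $\inf_K f \le \alpha$.

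Next, I would observe that $K$ is bounded, norm-closed, and convex, and $Z$ is reflexive by hypothesis. The restriction of $f$ to $K$ remains lower semi-continuous and convex. Proposition~\ref{p1} therefore applies and yields a minimizer $z^{\ast} \in K$ of $f$ on $K$. Since $\inf_{K} f = \inf_{Z} f$, this $z^{\ast}$ is a global minimizer of $f$ on $Z$, which proves the corollary.

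There is no real obstacle here; the only subtle point is the reading of the coercivity hypothesis and the implicit convention that $f$ is proper (so that $\inf_{Z} f$ is a real number). Once the coercivity allows us to trade the unbounded domain $Z$ for the bounded set $K$, the corollary is an immediate consequence of Proposition~\ref{p1}.
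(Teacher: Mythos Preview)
Your argument is correct and is exactly the standard reduction one would expect: coercivity localizes the infimum to a closed ball, and then Proposition~\ref{p1} does the rest. Note, however, that the paper does not actually supply a proof of Corollary~\ref{c1}; it is quoted from \cite{MR1797596} as a preliminary result, so there is no in-paper argument to compare against. Your reading of the hypothesis (the $z_0$ in the limit should be $\infty$) is the only sensible one, and your handling of the proper/improper cases is fine.
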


The Ekeland variational principle is very useful to prove the existence of a minimum point in the absence of standard compactness condition \cite{NRN10}. The principle can be stated as follows:

\begin{Theorem} ( \cite{NRN10}) \label{t1}
 Let $(Z,d)$ be a complete metric space and $f \colon Z \mapsto (-\infty,\infty]$ be a lower semicontinuous function, bounded from below and nonidentically $+\infty$. Also let $\epsilon > 0$ and $z \in Z$ be such that $$ f(z) \le \inf \{ f(z) ~|~ z \in Z \} + \epsilon. $$ Then for any $\lambda > 0$ there exist $z_{\epsilon} \in Z $ such that $$ f(z_{\epsilon}) \le f(z), $$
$$ d(z_{\epsilon},z) \le \lambda), $$
$$ f(z_{\epsilon}) < f(z)+\epsilon \lambda^{-1} d(z_{\epsilon},z), ~~ \text{for any}~ z \in Z \setminus \{z_{\epsilon} \}. $$
\end{Theorem}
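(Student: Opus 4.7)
\bigskip
\noindent\textbf{Proof proposal for Theorem \ref{t1} (Ekeland's variational principle).}
My plan is to follow the classical partial order argument of Ekeland, organised around a ``descending'' relation on $Z$ that is tailored to the conclusion we want. Concretely, I would define
\[
u \preceq v \quad \Longleftrightarrow \quad f(u) + \epsilon \lambda^{-1} d(u,v) \le f(v).
\]
First I would check that $\preceq$ is a genuine partial order on $Z$: reflexivity is immediate, antisymmetry follows because $u\preceq v$ and $v\preceq u$ force $d(u,v)=0$, and transitivity comes from the triangle inequality applied to $d$. For any $v\in Z$ set
\[
S(v) := \{\, u \in Z : u \preceq v\, \}.
\]
Lower semicontinuity of $f$ and continuity of $d(\cdot,v)$ make $S(v)$ closed, and it is non-empty since $v\in S(v)$. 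Because $f$ is bounded below, the relation $\epsilon \lambda^{-1} d(u,v) \le f(v)-f(u)$ forces $S(v)$ to be bounded in $Z$.

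Next I would construct a sequence $(z_n)$ by induction, starting from the given point $z_0 := z$. Given $z_n$, set $c_n := \inf_{u \in S(z_n)} f(u)$, which is finite, and pick $z_{n+1} \in S(z_n)$ with
\[
f(z_{n+1}) \le \tfrac{1}{2}\bigl(c_n + f(z_n)\bigr).
\]
Since $\preceq$ is transitive, $S(z_{n+1}) \subset S(z_n)$, so $c_n \le c_{n+1}$. For any $u \in S(z_{n+1})$ the definition of $\preceq$ combined with $f(u) \ge c_n$ gives
\[
\epsilon \lambda^{-1} d(u,z_{n+1}) \le f(z_{n+1}) - f(u) \le f(z_{n+1}) - c_n \le \tfrac{1}{2}\bigl(f(z_n) - c_n\bigr).
\]
Iterating shows that $\operatorname{diam}\,S(z_n) \to 0$; together with completeness of $(Z,d)$ and closedness of each $S(z_n)$, the nested intersection $\bigcap_{n\ge 0} S(z_n)$ consists of a single point, which I will call $z_\epsilon$.

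It then remains to verify the three conclusions. Since $z_\epsilon \in S(z_0) = S(z)$, the relation $z_\epsilon \preceq z$ unpacks to $f(z_\epsilon) \le f(z) - \epsilon\lambda^{-1} d(z_\epsilon,z) \le f(z)$, giving (i); and using the hypothesis $f(z) \le \inf_Z f + \epsilon$ together with $f(z_\epsilon) \ge \inf_Z f$ gives $d(z_\epsilon,z) \le (\lambda/\epsilon)(f(z) - f(z_\epsilon)) \le \lambda$, which is (ii). For the strict minimality property (iii), suppose for contradiction that some $z' \in Z \setminus \{z_\epsilon\}$ satisfies $f(z_\epsilon) \ge f(z') + \epsilon \lambda^{-1} d(z_\epsilon, z')$, i.e.\ $z' \preceq z_\epsilon$. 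By transitivity $z' \in S(z_n)$ for every $n$, so $z' \in \bigcap_n S(z_n) = \{z_\epsilon\}$, contradicting $z' \ne z_\epsilon$.

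I expect the only delicate step to be the diameter estimate that makes the nested sets shrink to a point; the choice $f(z_{n+1}) \le \tfrac{1}{2}(c_n + f(z_n))$ is what makes the telescoping quantity $f(z_n)-c_n$ decay geometrically, and one has to be careful that the bound is uniform over all $u \in S(z_{n+1})$, not just at the selected $z_{n+1}$. Everything else is bookkeeping once the partial order $\preceq$ has been set up.
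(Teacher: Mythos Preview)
Your argument is correct and follows the standard partial-order/Cantor-intersection proof of Ekeland's variational principle. Note, however, that the paper does not supply its own proof of this theorem: it is stated in the Preliminaries section as a quoted result from \cite{NRN10} and used as a black box later on. So there is no ``paper's proof'' to compare against; your proposal simply fills in what the authors chose to cite rather than reprove. The construction you give (the order $u\preceq v \Leftrightarrow f(u)+\epsilon\lambda^{-1}d(u,v)\le f(v)$, the half-step choice of $z_{n+1}$, and the Cantor intersection to produce $z_\epsilon$) is exactly the classical route and all three conclusions are verified correctly. One small remark: the boundedness of $S(v)$ that you mention is true but not actually needed for the argument---what matters is only that $\operatorname{diam}S(z_n)\to 0$, which you establish via the geometric decay of $f(z_n)-c_n$.
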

Now, we will state two important inequalities called Gronwall inequalities \cite{MR1797596}.
\begin{Lemma} ( \cite{MR1797596}) \label{l1}
 Let $f \colon [a,b] \mapsto \mathbb{R}~ (a,b \in \mathbb{R}, a<b)$ be a continuous function, $\varphi \in L^{\infty}(a,b)$ and $\lambda \in L^{1}(a,b), \lambda(t) \ge 0 ~\text{a.e.}~ t \in (a,b).$ If $$ f(t) \le \varphi(t)+ \int_{a}^{t} \lambda(s) f(s) ds, $$ for each $t \in [a,b]$, then $$ f(t) \le \varphi(t) + \int_{a}^{t} \varphi(s) \lambda(s) \text{exp}\left( \int_{s}^{t} \lambda(\tau) d \tau \right) ds, $$
for each $t \in [a,b].$
\end{Lemma}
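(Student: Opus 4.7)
The plan is to rewrite the hypothesis as a differential inequality for the running integral, then apply an integrating factor. Define
\[
u(t) := \int_{a}^{t} \lambda(s) f(s)\, ds, \qquad t \in [a,b].
\]
Since $\lambda \in L^{1}(a,b)$ and $f$ is continuous (hence bounded) on $[a,b]$, the product $\lambda f$ lies in $L^{1}(a,b)$, so $u$ is absolutely continuous with $u'(t) = \lambda(t) f(t)$ a.e. The hypothesis $f(t) \le \varphi(t) + u(t)$, combined with $\lambda(t) \ge 0$ a.e., yields the linear differential inequality
\[
u'(t) \le \lambda(t)\varphi(t) + \lambda(t) u(t) \quad \text{a.e. } t \in (a,b).
\]

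Next I introduce the integrating factor $\mu(t) := \exp\!\bigl(-\int_{a}^{t} \lambda(\tau)\, d\tau\bigr)$, which is absolutely continuous with $\mu'(t) = -\lambda(t)\mu(t)$ a.e. Multiplying the above inequality by $\mu(t) > 0$ and using the product rule (valid for absolutely continuous functions), I obtain
\[
\frac{d}{dt}\bigl[u(t)\mu(t)\bigr] = \mu(t) u'(t) - \lambda(t)\mu(t) u(t) \le \lambda(t)\varphi(t)\mu(t) \quad \text{a.e.}
\]
Integrating from $a$ to $t$ and using $u(a) = 0$ and $\mu(a) = 1$ gives
\[
u(t)\mu(t) \le \int_{a}^{t} \lambda(s)\varphi(s)\mu(s)\, ds,
\]
so that, after dividing by $\mu(t)$ and simplifying $\mu(s)/\mu(t) = \exp\!\bigl(\int_{s}^{t} \lambda(\tau)\, d\tau\bigr)$, I get
\[
u(t) \le \int_{a}^{t} \lambda(s)\varphi(s) \exp\!\left(\int_{s}^{t} \lambda(\tau)\, d\tau\right) ds.
\]
Substituting this back into $f(t) \le \varphi(t) + u(t)$ yields the claimed inequality.

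The one point requiring care, and the likeliest spot for a slip, is the regularity gymnastics: because $\lambda$ is only $L^{1}$ and $\varphi$ only $L^{\infty}$, one cannot differentiate pointwise, and the product rule must be justified in the absolutely continuous category (alternatively via integration by parts for absolutely continuous functions). Once that is in place, everything else is the classical integrating-factor manipulation. No compactness, fixed-point argument, or iteration (the Picard-style bootstrap sometimes used for Gronwall) is needed with this approach.
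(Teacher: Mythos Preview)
Your proof is correct; this is the standard integrating-factor argument for the Gronwall inequality, and all the regularity justifications you flag (absolute continuity of $u$ and of the integrating factor $\mu$, validity of the product rule in the absolutely continuous category) are sound. Note, however, that the paper does not supply its own proof of this lemma: it is stated as a preliminary result with a citation to \cite{MR1797596} (Ani\c{t}a's monograph) and left unproved, so there is nothing in the paper to compare your argument against.
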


\begin{Lemma} ( \cite{MR1797596}) \label{l2}
 If in addition to Lemma \ref{l1}, let $\varphi(t)=C$, where $C$ is constant, then $$ f(t) \le C \text{exp} \left( \int_{a}^{t} \lambda(s) ds \right). $$
\end{Lemma}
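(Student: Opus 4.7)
The plan is to obtain this as a direct specialization of Lemma \ref{l1}: substitute the constant $\varphi(t) \equiv C$ into the conclusion of that lemma and then evaluate the resulting integral in closed form.

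First I would apply Lemma \ref{l1} verbatim. Its hypothesis $f(t) \le \varphi(t) + \int_a^t \lambda(s) f(s)\, ds$ is assumed to hold, so with $\varphi \equiv C$ the conclusion reads
\begin{equation*}
f(t) \;\le\; C \;+\; C\int_a^t \lambda(s)\,\exp\!\left(\int_s^t \lambda(\tau)\, d\tau\right) ds.
\end{equation*}
The only real work is to simplify the remaining integral.

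Next I would recognize the integrand as an exact derivative in $s$. Set $G(s) := \int_s^t \lambda(\tau)\, d\tau$; since $\lambda \in L^1(a,b)$ and $\lambda \ge 0$, $G$ is absolutely continuous in $s$ with $G'(s) = -\lambda(s)$ almost everywhere. Hence
\begin{equation*}
\lambda(s)\exp\!\left(\int_s^t \lambda(\tau)\, d\tau\right) \;=\; -\frac{d}{ds}\exp\!\bigl(G(s)\bigr)
\end{equation*}
a.e.\ on $(a,t)$, so by the fundamental theorem of calculus (in the Lebesgue sense, valid under absolute continuity)
\begin{equation*}
\int_a^t \lambda(s)\exp\!\left(\int_s^t \lambda(\tau)\, d\tau\right) ds \;=\; \exp\!\bigl(G(a)\bigr) - \exp\!\bigl(G(t)\bigr) \;=\; \exp\!\left(\int_a^t \lambda(\tau)\, d\tau\right) - 1.
\end{equation*}
Substituting this back into the bound for $f(t)$ gives
\begin{equation*}
f(t) \;\le\; C + C\left[\exp\!\left(\int_a^t \lambda(\tau)\, d\tau\right) - 1\right] \;=\; C\exp\!\left(\int_a^t \lambda(s)\, ds\right),
\end{equation*}
which is exactly the claim.

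There is no real obstacle here; the only technical point worth a moment's care is justifying the antiderivative computation when $\lambda$ is merely $L^1$ rather than continuous, but absolute continuity of $s \mapsto \int_s^t \lambda(\tau)\, d\tau$ together with the chain rule for compositions of an absolutely continuous function with $\exp$ handles this cleanly, so the entire argument is essentially a one-line substitution.
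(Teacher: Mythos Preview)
Your argument is correct: substituting $\varphi\equiv C$ into the conclusion of Lemma~\ref{l1} and evaluating the resulting integral via the antiderivative $-\exp\bigl(\int_s^t\lambda(\tau)\,d\tau\bigr)$ yields the stated bound, and your remark about absolute continuity adequately justifies the computation for $\lambda\in L^1$. Note that the paper does not actually supply a proof of this lemma; it is quoted as a standard preliminary from \cite{MR1797596}, so there is no in-paper argument to compare against.
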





\mysection{Existence, Uniqueness and Estimates}
In this section, our task is to obtain important estimates on mild solution. These estimates will be important while studying optimal birth control problem. Firstly, we will give key steps to derive mild solution. Similar form of mild solution is also given by N. Kato \cite{Kato2016}, but without the ratio $r$. The key steps to derive mild solution are given for the convenience of readers.

Suppose $p(s,t)$ satisfies (\ref{1.1}) in strict sense and let

 $$ u(\zeta ;t,s) = \text{exp}\left(\int_{\tau_{0}}^{\zeta}
 \partial_{s} g (\psi(\eta ; t,s),\eta)) d \eta\right) p(s(\zeta),\zeta), \quad \text{where} \quad \psi(\zeta;t,s)= s(\zeta). $$

Differentiating $ u(\zeta ;t,s)$ with respect to $\zeta$, we get

\begin{equation} \label{4.1} \frac{d}{d \zeta}(u(\zeta ;t,s)) = [A - M(\psi(\zeta ;t,s),\zeta)] u(\zeta ;t,s) + \text{exp}\left(\int_{\tau_{0}}^{\zeta}
 \partial_{s} g (\psi(\eta ; t,s),\eta))d \eta\right) f(s(\zeta),\zeta).
\end{equation}
Using the variation of constant formula and after substituting $\zeta =t$, we obtain
$$ p(s,t) = Q(\tau_{0})  S(t- \tau_{0})p(0,\tau_{0}) + \int_{\tau_{0}}^{t} Q(\eta) S(t- \eta)[-M(s(\eta),\eta)p(s(\eta),\eta))+f(s(\eta),\eta)] d \eta,
$$
where $$Q(\tau_{0}) = \text{exp} \left( - \int_{\tau_{0}}^{t} \partial_{s} g (\psi(\eta ; t,s),\eta) d\eta \right) .$$
For the case (A1)-(a) and (A1)-(b), $\gamma(0,t) >0$, so $p(0,t)$ is defined by
\begin{equation} \label{eqn 4.2}
p(0,t) =\frac{1}{\gamma(0,t)}\left[C(t) + \int_{0}^{s_{f}} r(s,t) B(s,t)p(s,t)ds \right] \quad \text{for} \quad t \in (0,T).
\end{equation}
Also, for the case (A1)-(a) and (A1)-(b), we have
$$\tau_{0}(t,s)=\tau_{0} \quad \text{for} \quad s \in (0,z_{0}(t)) \quad \text{and} \quad \tau_{0}(t,s) = 0 \quad \text{for} \quad s \in (z_{0}(t),s_{f}).$$

Moreover, in this case for a.e $t \in (0,T)$, $p(s,t)$ is given by
\[
p(s,t) = \begin{cases}

Q(\tau_{0})  S(t- \tau_{0})p(0,\tau_{0}) + \int_{\tau_{0}}^{t} Q(\eta) S(t- \eta)[-M(s(\eta),\eta)p(s(\eta),\eta))+f(s(\eta),\eta)] d \eta
 &  ,  s \in (0,z_{0}(t))

   \\

Q(0)  S(t)p_{0} + \int_{0}^{t} Q(\eta) S(t- \eta)[-M(s(\eta),\eta)p(s(\eta),\eta))+f(s(\eta),\eta)] d \eta
& ,  s \in (z_{0}(t),s_{f}).

\end{cases}
\]
Note that $p_{0}=p_{0}(\psi(0;t,s))$ and for the case (A1)-(c) and (A1)-(d), there is no reproduction rate, so for these cases, we have
$$p(s,t) =Q(0)  S(t)p_{0} + \int_{0}^{t} Q(\eta) S(t- \eta)[-M(s(\eta),\eta)p(s(\eta),\eta))+f(s(\eta),\eta)] d \eta \quad \text{for} \quad (s,t) \in S_{T_{f}}.$$

We define the above solution with (\ref{eqn 4.2}) as \textbf{mild solution}.

\begin{Remark} \label{r1}
 Suppose $\mu, \beta \in L^{\infty}(\Omega_{Ts})$ and assume that $\mu, \beta$ are non negative almost everywhere in $\Omega_{Ts}$. Let $f \in L^{1}(\Omega_{Ts}), f(s,t,x) \ge 0 ~\text{a.e.}~ (s,t,x) \in \Omega_{Ts} $  and $C \in L^{1}(\Omega_{T}), C(t,x) \ge 0 ~ \text{a.e.}~ (t,x) \in \Omega_{T}$ and also $p_{0} \in L^{1}(\Omega_{s}), p_{0}(s,x) \ge 0 ~ \text{a.e.} ~ (s,x) \in \Omega_{s}$. If the semigroup $\{ S(t) ~|~ t \ge 0 \}$ satisfies $S(t)Z_{+} \subset Z_{+}$, we have existence of  $p \in L^{\infty}(0,T;L^{1}(0,s_{f};Z_{+}))$ and $p(s,t) \in Z_{+} ~\text{a.e.}~ (s,t) \in S_{Tf}$, where $Z_{+}$ is a positive cone with vertex $0$.
\end{Remark}

Let $C_{\psi}^{k}(S_{T_{f}} ;Z)$ be the collection of those functions which are $Z$-valued and $k$ times continuously differentiable along the characteristic curve $\psi$. Suppose $D_{\psi}p(s,t)$ is the derivative along the characteristic $\psi$  $$ D_{\psi}p(s,t):= \lim_{h \to 0} \frac{[p(\psi(t+h;t,s),t+h)-p(s,t)]}{h}. $$ Then, it is easy to see that $$D_{\psi}p(\psi(\eta;t,s),\eta)=\frac{d}{d \eta}p(\psi(\eta;t,s),\eta).$$

 \begin{Theorem} \label{mt1}
  For the \textbf{Case (A1)-(a)} and \textbf{Case (A1)-(b)}, any mild solution $p \in L^{\infty}(0,T; L^{1}(0,s_{f};Z)) $ of (\ref{1.1}) is continuously differentiable along almost every characteristic $\psi$ and satisfies
 \begin{equation} \label{4.3}
 \begin{cases}
  D_{\psi}p(s,t) = A p(s,t) - \partial_{s}\gamma(s,t)p(s,t) - M (s,t)p(s,t)+f(s,t)  \quad \text{a.e} \quad (s,t) \in S_{T_{f}} \\
 \gamma(0,t)p(0,t) = C(t) + \int_{0}^{s_{f}} r(s,t)B(s,t)p(s,t)ds, \quad \text{a.e} \quad t \in (0,T) \\
$$ p(s,0 = p_{0}(s), \quad \text{a.e} \quad s \in (0,s_{f})
 \end{cases}
\end{equation}
where $p(s,0)$ and $p(0,t)$ are given by the following limits:
$$ p(s,0) = \lim_{\eta \to 0^+}p(\psi(\eta;0,s),\eta) \quad \text{a.e} \quad s \in (0,s_{f}), $$
$$ p(0,t) = \lim_{\eta \to t}p(\psi(\eta;t,0),\eta) \quad \text{a.e} \quad t \in (0,T), $$ and in \textbf{case (A1)-(c)} and \textbf{case (A1)-(d)}
\begin{equation} \label{4.4}
\begin{cases}
 D_{\psi}p(s,t) = A p(s,t) - \partial_{s}\gamma(s,t)p(s,t) - M (s,t)p(s,t)+f(s,t)  \quad \text{a.e} \quad (s,t) \in S_{T_{f}} \\
 p(s,0)=p_{0}(s), \quad \text{a.e} \quad s \in (0,s_{f}).
 \end{cases}
 \end{equation}

Conversely, if $p \in L^{\infty}(0,T; L^{1}(0,s_{f};Z)) \cap C_{\psi}^{1}(S_{Tf};Z) $ satisfies (\ref{4.3}) and (\ref{4.4}), then $p$ is a mild solution of (\ref{1.2}). \\
\end{Theorem}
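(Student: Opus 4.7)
My plan is to reduce the PDE along each characteristic curve to an abstract Cauchy problem in $Z$ and then exploit the variation-of-constants formula for the $C_{0}$-semigroup $\{S(t)\}$ generated by $A$. The same computation, read in two directions, yields both implications. Throughout, fix $(s,t) \in S_{T_{f}}$, let $\eta \mapsto \psi(\eta;t,s)$ denote the characteristic through $(s,t)$, and write $v(\eta) := p(\psi(\eta;t,s),\eta)$ together with
$$Q(a,b) := \exp\!\Bigl(-\int_{a}^{b} \partial_{s}\gamma(\psi(\zeta;t,s),\zeta)\,d\zeta\Bigr).$$
In Cases (A1)-(a)/(b) I split on whether $s \le z_{0}(t)$ (so $\tau_{0}>0$ and the characteristic originates from the renewal boundary $s=0$ at time $\tau_{0}$) or $s>z_{0}(t)$ (so $\tau_{0}=0$, originating from the initial datum). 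In Cases (A1)-(c)/(d), $\gamma(0,t)=0$ forces every characteristic to come from $t=0$, so only the second branch is relevant.

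For the forward direction, I substitute $(\psi(\eta;t,s),\eta)$ into the mild solution formula and use the semigroup identity $S(t-\sigma)=S(t-\eta)S(\eta-\sigma)$ together with the additivity $Q(\tau_{0},t)=Q(\tau_{0},\eta)Q(\eta,t)$ to obtain the self-similar representation
$$p(s,t) = Q(\eta,t)\,S(t-\eta)\,v(\eta) + \int_{\eta}^{t} Q(\sigma,t)\,S(t-\sigma)\bigl[-M(\psi,\sigma)v(\sigma) + f(\psi,\sigma)\bigr]\,d\sigma.$$
Differentiating at $\eta=t$ (Lebesgue differentiation of the integral, chain rule on $Q$, and the infinitesimal-generator property of $S$) produces
$$D_{\psi}p(s,t) = A p(s,t) - \partial_{s}\gamma(s,t)\,p(s,t) - M(s,t)\,p(s,t) + f(s,t),$$
valid for a.e.\ $(s,t) \in S_{T_{f}}$. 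The boundary identity (\ref{eqn 4.2}) and the initial condition $p(s,0)=p_{0}(s)$ are recovered by letting $\eta \downarrow \tau_{0}(t,s)$ in the mild formula and invoking (A4) plus strong continuity of $S$.

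For the converse, given $p \in L^{\infty}(0,T;L^{1}(0,s_{f};Z)) \cap C_{\psi}^{1}(S_{Tf};Z)$ satisfying (\ref{4.3})/(\ref{4.4}), I set $w(\eta) := Q(\eta,t)^{-1} v(\eta)$. Differentiating $w$ and substituting (\ref{4.3}) cancels the $-\partial_{s}\gamma\,p$ term and reduces the problem to the abstract inhomogeneous equation
$$\frac{dw}{d\eta} = A w(\eta) + Q(\eta,t)^{-1}\bigl[-M(\psi,\eta)v(\eta) + f(\psi,\eta)\bigr];$$
uniqueness of mild solutions to this semilinear Cauchy problem forces $w$ to coincide with its variation-of-constants representation with $S(\cdot)$, and unwinding the change of variable recovers the mild formula, with the boundary identity in (\ref{4.3}) used to rewrite $p(0,\tau_{0})$ when $\tau_{0}>0$. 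The main obstacle is that $v(\tau_{0})$ lies only in $Z$ and not in $D(A)$, so $A p(s,t)$ is not classically pointwise defined; I plan to sidestep this exactly as in \cite{Kato2016} by interpreting (\ref{4.3}) in the mild/integrated form derived above, which holds genuinely pointwise for the a.e.\ set of characteristics along which $f(\psi,\cdot)$, $C(\cdot)$ and $p_{0}$ are Lebesgue points (guaranteed by (A4)) and only in the integrated sense elsewhere. Managing this regularity subtlety, together with the dichotomy $\tau_{0}=0$ versus $\tau_{0}>0$ in the boundary conditions for the four growth-rate cases, will be the most technical part of the argument.
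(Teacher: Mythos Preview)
Your proposal is correct and follows essentially the same route as the paper: both reduce the problem along each characteristic to an abstract inhomogeneous Cauchy problem in $Z$ via the integrating factor $Q$ (the paper's $u(\zeta;t,s)=\exp\bigl(\int_{\tau_0}^{\zeta}\partial_s\gamma\bigr)p(s(\zeta),\zeta)$ is precisely your $w$ up to a constant factor) and then invoke the variation-of-constants formula for $\{S(t)\}$. The paper's proof is considerably terser---it dismisses the forward direction as ``easy'' by appealing to analyticity of the semigroup and for the converse simply writes down the ODE for $p(\psi(\eta;t,s),\eta)$ and applies Duhamel---so your explicit self-similar representation, the $\eta\to t$ differentiation, and especially your flagging of the $D(A)$-regularity issue (which the paper does not address) add genuine clarity without departing from the underlying strategy.
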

\begin{proof}
 The proof of necessary part is easy and follows just by using the fact that $A $ generates an analytic semigroup. Conversely, suppose that $p \in L^{\infty}(0,T; L^{1}(0,s_{f};Z)) \cap C_{\psi}^{1}(S_{Tf};Z) $ and satisfies (\ref{4.3}). So, $$ D_{\psi}p(s,t) = A p(s,t) - \partial_{s}\gamma(s,t)p(s,t) - M (s,t)p(s,t)+f(s,t)  \quad \text{a.e} \quad (s,t) \in S_{T_{f}} $$ because
$$D_{\psi}p(\psi(\eta;t,s),\eta)=\frac{d}{d \eta}p(\psi(\eta;t,s),\eta).$$
Differentiating $p$ with respect to $\eta,$ we obtain 
\begin{dmath*}
\frac{d}{d \eta}p(\psi(\eta;t,s),\eta) = A p(\psi(\eta;t,s),\eta) - \partial_{s}\gamma(\psi(\eta;t,s),\eta)p(\psi(\eta;t,s),\eta) \\
- M (\psi(\eta;t,s),\eta)p(\psi(\eta;t,s),\eta)+f(\psi(\eta;t,s),\eta) 
\end{dmath*}
$$
 p(\psi(\tau_{0};t,s),\tau_{0})=p(0,\tau_{0}).$$
Therefore, by variation of constants formula, the solution starting from initial time $\tau_{0}$ with $\eta = t$ is given by $$ Q(\tau_{0})  S(t- \tau_{0})p(0,\tau_{0}) + \int_{\tau_{0}}^{t} Q(\eta) S(t- \eta)[-M(s(\eta),\eta)p(s(\eta),\eta))+f(s(\eta),\eta)] d \eta,$$
where   $ Q(\tau_{0}) = \text{exp} \left( - \int_{\tau_{0}}^{t} \partial_{s} g (\psi(\eta ; t,s),\eta) d\eta \right). $ Also $p(\psi(0;t,s),0)=p_{0}(\psi(0;t,s))$, so $p$ is a mild solution to (\ref{1.2}).
\end{proof}
One can observe that $s=\zeta = \tau_{0}(t,s),$ which implies
$s=\psi(t;\zeta,0)$ and using (\ref{3.1}), we have
$$ \frac{ds}{d \zeta} = -\gamma(0,\zeta) \text{exp} \left( -\int_{\zeta}^{t} \partial_{s} g (\psi(\eta;\zeta,0),\eta)d \eta \right). $$
Moreover $s=\phi = \psi(\zeta;t,s),$ which implies
$s= \psi(t;\zeta,\phi)$ and again using (\ref{3.1}), we have
$$ \frac{ds}{d \phi}= \text{exp} \left( \int_{\zeta}^{t} \partial_{s}\gamma(\psi(\eta;t,s),\eta)d \eta \right). $$
Suppose that the assumptions (A1)-(A5) holds, and suppose that $p_{1},p_{2} \in L^{\infty}(0,T;L^{1}(0,s_{f};Z))$ be mild solutions to (\ref{1.1}).
Then for $t \in [0,T]$
$$ \| p_{1}(\cdot,t)-p_{2}(\cdot,t) \|_{L^{1}(0,s_{f};Z)} \le \int_{0}^{z_{0}(t)} \| Q(\tau_{0}) S(t-\tau_{0})(p_{1}(0,\tau_{0})-p_{2}(0,\tau_{0})) \|_{Z}~ ds $$
$$ + \int_{0}^{z_{0}(t)} \int_{\tau_{0}}^{t} \| Q(\eta)S(t-\eta) M(s(\eta),\eta)(p_{1}-p_{2}) \|_{Z}~ d \eta ds + \int_{z_{0}(t)}^{s_{f}} \int_{0}^{t} \| Q(\eta)S(t-\eta) M(s(\eta),\eta)(p_{1}-p_{2}) \|_{Z}~ d \eta ds.$$     Using the above transformation $s = \zeta$ to first integral on R.H.S  and $s = \phi$ to second and third integral on R.H.S,we will get
$$ \| p_{1}(\cdot,t)-p_{2}(\cdot,t) \|_{L^{1}(0,s_{f};Z)} \le \tilde{C} \int_{0}^{t} \| p_{1}(\cdot,\eta)-p_{2}(\cdot,\eta) \|_{L^{1}(0,s_{f};Z)}~d \eta,$$
where $\tilde{C}$ is generic constant which depends on $ \| B \|_{L^{\infty}(S_{Tf};\mathcal{L}(Z))} , \| M \|_{L^{\infty}(S_{Tf};\mathcal{L}(Z))},T $ and the bound of semigroup $S(t)$. By using  Gronwall's inequality, we get
$$  \| p_{1}(\cdot,t)-p_{2}(\cdot,t) \|_{L^{1}(0,s_{f};Z)}=0,$$   which shows that the mild solution is unique.


\begin{Theorem} \label{mt2}
Suppose that the assumptions (A1)-(A5) hold and $p_{1}, p_{2} \in L^{\infty}(0,T;L^{1}(0,s_{f};Z))$ be mild solutions corresponding to the initial data $p_{01}$ and $p_{02}$ respectively. Then  $$ \| p_{1}(\cdot,t)-p_{2}(\cdot,t) \|_{L^{1}(0,s_{f};Z)} \le \tilde{C} \| p_{01}-p_{02} \|_{L^{1}(0,s_{f};Z)}, $$
where $\tilde{C}$ is a generic constant which depends on $ \| B \|_{L^{\infty}(S_{Tf};\mathcal{L}(Z))} , \| M \|_{L^{\infty}(S_{Tf};\mathcal{L}(Z))},T $ and the bound of the semigroup $\{ S(t) , t \ge 0 \}$.
\end{Theorem}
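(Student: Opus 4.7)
The plan is to exploit the linearity of system (\ref{1.1}) in $p$: setting $w = p_1 - p_2$, the function $w$ is itself a mild solution of (\ref{1.1}) but with source $f \equiv 0$, influx $C \equiv 0$, and initial datum $w_{0} := p_{01} - p_{02}$. Writing out the mild solution formula (as displayed just before the statement of the theorem) for $w$, one reduces the problem to estimating each resulting piece in the $L^{1}(0,s_{f};Z)$ norm and then closing the loop with Gronwall's inequality.

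First I would treat the regime $s \in (z_{0}(t), s_{f})$ (or all of $(0,s_{f})$ in cases (A1)-(c), (A1)-(d)), where the mild solution formula carries the term $Q(0)S(t)w_{0}(\psi(0;t,s))$. Using the change of variables $s = \phi = \psi(0;t,s)$ already recorded in the excerpt, the Jacobian $ds/d\phi = \exp\bigl(\int_{0}^{t}\partial_{s}\gamma \,d\eta\bigr)$ cancels against the factor in $Q(0)$, and (A2) gives $\|S(t)\psi\|_{Z} \le \tilde{C}\|\psi\|_{Z}$; this yields a bound of the form $\tilde{C}\|w_{0}\|_{L^{1}(0,s_{f};Z)}$. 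The interior integrals $\int_{\tau_{0}}^{t}$ (resp.\ $\int_{0}^{t}$) containing $-M(s(\eta),\eta)w(s(\eta),\eta)$ are handled exactly as in the uniqueness argument given immediately before the theorem statement: after the changes of variable $s=\phi$ and $s=\zeta$, they contribute at most $\tilde{C}\int_{0}^{t}\|w(\cdot,\eta)\|_{L^{1}(0,s_{f};Z)}\,d\eta$, with the constant depending on $\|M\|_{L^{\infty}(S_{T_{f}};\mathcal{L}(Z))}$, $T$, and the semigroup bound.

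The delicate piece, and the main obstacle, is the boundary contribution $Q(\tau_{0})S(t-\tau_{0})w(0,\tau_{0})$ arising on $s\in(0,z_{0}(t))$ in cases (A1)-(a) and (A1)-(b). Here $w(0,\tau_{0})$ is not a free datum but is determined through the renewal condition; taking the difference of the two boundary identities (and using that $C$ cancels) produces
\begin{equation*}
\gamma(0,\tau_{0})\,w(0,\tau_{0}) \;=\; \int_{0}^{s_{f}} r(s,\tau_{0})\,B(s,\tau_{0})\,w(s,\tau_{0})\,ds,
\end{equation*}
so (A1) and (A5) yield $\|w(0,\tau_{0})\|_{Z} \le \tilde{C}\,\|B\|_{L^{\infty}(S_{T_{f}};\mathcal{L}(Z))}\,\|w(\cdot,\tau_{0})\|_{L^{1}(0,s_{f};Z)}$. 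Inserting this estimate, applying the change of variables $s=\zeta=\tau_{0}(t,s)$ with Jacobian $ds/d\zeta = -\gamma(0,\zeta)\exp\bigl(-\int_{\zeta}^{t}\partial_{s}\gamma\,d\eta\bigr)$ (which cancels against $Q(\tau_{0})$ and the $\gamma(0,\tau_{0})$ in the denominator), and using (A2), this entire boundary contribution is dominated by $\tilde{C}\int_{0}^{t}\|w(\cdot,\eta)\|_{L^{1}(0,s_{f};Z)}\,d\eta$. Getting the Jacobian bookkeeping right, and checking that it works uniformly over the four subcases (a)--(d) of (A1), is where I expect the main care to be required.

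Combining all the pieces I obtain an integral inequality
\begin{equation*}
\|w(\cdot,t)\|_{L^{1}(0,s_{f};Z)} \;\le\; \tilde{C}_{1}\,\|p_{01}-p_{02}\|_{L^{1}(0,s_{f};Z)} \;+\; \tilde{C}_{2}\int_{0}^{t}\|w(\cdot,\eta)\|_{L^{1}(0,s_{f};Z)}\,d\eta,
\end{equation*}
valid for all $t\in[0,T]$, with constants depending only on the quantities listed in the theorem. Since the first term on the right is constant in $t$, Lemma \ref{l2} (Gronwall) yields
\begin{equation*}
\|w(\cdot,t)\|_{L^{1}(0,s_{f};Z)} \;\le\; \tilde{C}_{1}\,e^{\tilde{C}_{2}T}\,\|p_{01}-p_{02}\|_{L^{1}(0,s_{f};Z)},
\end{equation*}
which is the desired estimate after absorbing $\tilde{C}_{1}e^{\tilde{C}_{2}T}$ into a single generic constant $\tilde{C}$.
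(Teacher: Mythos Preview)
Your proposal is correct and follows essentially the same route as the paper's proof: split the mild-solution difference into the boundary piece on $(0,z_{0}(t))$, the initial-data piece on $(z_{0}(t),s_{f})$, and the two interior integrals involving $M$, then apply the changes of variable $s=\zeta=\tau_{0}(t,s)$ and $s=\phi=\psi(\cdot;t,s)$ to reduce everything to $\tilde{C}\|p_{01}-p_{02}\|_{L^{1}} + \tilde{C}\int_{0}^{t}\|w(\cdot,\eta)\|_{L^{1}}\,d\eta$, and finish with Gronwall. If anything, your write-up is more explicit than the paper's about why the boundary term is controlled (via the renewal identity for $w(0,\tau_{0})$) and about the Jacobian cancellations.
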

\begin{proof}
 For $t \in [0,T]$ and $p_{1}, p_{2} \in L^{\infty}(0,T;L^{1}(0,s_{f};Z)),$ we have
 \begin{eqnarray*}
\| p_{1}(\cdot,t)-p_{2}(\cdot,t) \|_{L^{1}(0,s_{f};Z)} & \le & \int_{0}^{z_{0}(t)} \| Q(\tau_{0}) S(t-\tau_{0})(p_{1}(0,\tau_{0})-p_{2}(0,\tau_{0})) \|_{Z}~ ds \\&+& \int_{z_{0}(t)}^{s_{f}} \| Q(0)S(t)(p_{01}-p_{02}) \|_{Z}~ ds
\\ &+& \int_{0}^{z_{0}(t)} \int_{\tau_{0}}^{t} \| Q(\eta)S(t-\eta) M(s(\eta),\eta)(p_{1}-p_{2}) \|_{Z}~ d \eta ds \\ &+& \int_{z_{0}(t)}^{s_{f}} \int_{0}^{t} \| Q(\eta)S(t-\eta) M(s(\eta),\eta)(p_{1}-p_{2}) \|_{Z}~ d \eta ds.
\end{eqnarray*}

Let us denote $$ I_{1}= \int_{0}^{z_{0}(t)} \| Q(\tau_{0}) S(t-\tau_{0})(p_{1}(0,\tau_{0})-p_{2}(0,\tau_{0})) \|_{Z}~ ds, $$
$$ I_{2}= \int_{z_{0}(t)}^{s_{f}} \| Q(0)S(t)(p_{01}-p_{02}) \|_{Z}~ ds, $$
$$ I_{3}= \int_{0}^{z_{0}(t)} \int_{\tau_{0}}^{t} \| Q(\eta)S(t-\eta) M(s(\eta),\eta)(p_{1}-p_{2}) \|_{Z}~ d \eta ds, $$
$$ I_{4}= \int_{z_{0}(t)}^{s_{f}} \int_{0}^{t} \| Q(\eta)S(t-\eta) M(s(\eta),\eta)(p_{1}-p_{2}) \|_{Z}~ d \eta ds. $$
Now, again using the transformation $s=\zeta= \tau_{0}(t,s)$ on $I_{1}$, $s=\phi=\psi(0;t,s)$ on $I_{2}$ and $s=\phi = \psi(\zeta;t,s)$ on $I_{3}$ and $I_{4}$, we get
$$ \| p_{1}(\cdot,t)-p_{2}(\cdot,t) \|_{L^{1}(0,s_{f};Z)} \le \tilde{C} \| p_{01}-p_{02} \|_{L^{1}(0,s_{f};Z)} + \tilde{C} \int_{0}^{t} \| p_{1}(\cdot,\zeta)-p_{2}(\cdot, \zeta) \|_{L^{1}(0,s_{f};Z)~ d\zeta},$$
where $\tilde{C}$ is generic constant which depends on $ \| B \|_{L^{\infty}(S_{Tf};\mathcal{L}(Z))} , \| M \|_{L^{\infty}(S_{Tf};\mathcal{L}(Z))},T $ and the bound of the  semigroup $S(t)$.
By using the Gronwall's inequality, we obtain    $$ \| p_{1}(\cdot,t)-p_{2}(\cdot,t) \|_{L^{1}(0,s_{f};Z)} \le \tilde{C} \| p_{01}-p_{02} \|_{L^{1}(0,s_{f};Z)}.$$
\end{proof}
In a similar manner, we can state the following result:
\begin{Theorem} \label{mt3}
Suppose that the assumptions (A1)-(A5) hold and $p^{\beta_{1}},p^{\beta_{2}} \in L^{\infty}(0,T;L^{1}(0,s_{f};Z))$ be solutions corresponding to the fertility rates $\beta_{1}$ and $\beta_{2}$ respectively. Then
\begin{equation}
 \left \| p^{\beta_{1}}(\cdot,t)-p^{\beta_{2}}(\cdot,t) \right \|_{L^{1}(0,s_{f};Z)} \le \tilde{C} \text{exp} \left( T \| {B}_{1}-{B}_{2} \|_{L^{\infty}(S_{Tf};\mathcal{L}(Z))} \right),
\end{equation}
where $\tilde{C}$ is a positive constant which depends on  $\| M \|_{L^{\infty}(S_{Tf};\mathcal{L}(Z))},T$ and the bound of the semigroup $\{ S(t) , t \ge 0 \}$. \\
Moreover, if $\tilde{C} \le \frac{C_{1}} {\| {B}_{1}-{B}_{2} \|_{L^{\infty}(S_{Tf};\mathcal{L}(Z))}}~ \forall~ {B}_{1},{B}_{2} \in \mathcal{U} $ and for some positive constant $C_{1}$, then
\begin{eqnarray}
 \left \| p^{\beta_{1}}(\cdot,t)-p^{\beta_{2}}(\cdot,t) \right \|_{L^{1}(0,s_{f};Z)} \le M_{1}\| {B}_{1}-{B}_{2} \|_{L^{\infty}(S_{Tf};\mathcal{L}(Z))},
 \end{eqnarray}
 where $M_{1}$ is a positive constant.
 \end{Theorem}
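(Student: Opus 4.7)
The plan is to adapt the proof of Theorem \ref{mt2} by transferring the discrepancy from the initial data to the birth-rate operator. Writing the mild solution formula for both $p^{\beta_1}$ and $p^{\beta_2}$ and subtracting, one obtains a four-piece decomposition analogous to $I_1,\ldots,I_4$, but since the two solutions now share the initial datum $p_0$, the term corresponding to $I_2$ disappears and all of the discrepancy is carried by the renewal term entering $I_1$ and by the mortality-driven feedback terms $I_3,I_4$.

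The key step is to expand $p^{\beta_1}(0,\tau_0)-p^{\beta_2}(0,\tau_0)$ using the boundary condition \eqref{eqn 4.2} and to split the integrand by adding and subtracting $B_1 p^{\beta_2}$,
\[
B_1\,p^{\beta_1} - B_2\,p^{\beta_2} \;=\; (B_1-B_2)\,p^{\beta_2} \;+\; B_1\,(p^{\beta_1}-p^{\beta_2}).
\]
The first piece is an inhomogeneous forcing bounded by $\|B_1-B_2\|_{L^{\infty}(S_{Tf};\mathcal{L}(Z))}\,\|p^{\beta_2}(\cdot,\tau_0)\|_{L^1(0,s_f;Z)}$, which is controlled uniformly in $\tau_0\in[0,T]$ because $p^{\beta_2}\in L^\infty(0,T;L^1(0,s_f;Z))$; the second piece produces a Gronwall feedback in the unknown $\|p^{\beta_1}(\cdot,\cdot)-p^{\beta_2}(\cdot,\cdot)\|_{L^1(0,s_f;Z)}$, of exactly the same type as $I_3$ and $I_4$ in Theorem \ref{mt2}. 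The contributions from $I_3,I_4$ themselves are handled verbatim as in the previous theorem. After applying the same changes of variables $s=\zeta=\tau_0(t,s)$ and $s=\phi=\psi(\zeta;t,s)$ used there, one arrives at an inequality of the form
\[
\|p^{\beta_1}(\cdot,t)-p^{\beta_2}(\cdot,t)\|_{L^1(0,s_f;Z)} \;\le\; C_0\,\|B_1-B_2\|_{L^{\infty}(S_{Tf};\mathcal{L}(Z))} \;+\; \tilde C \int_0^t \|p^{\beta_1}(\cdot,\zeta)-p^{\beta_2}(\cdot,\zeta)\|_{L^1(0,s_f;Z)}\,d\zeta,
\]
and Lemma \ref{l2} (constant-$\varphi$ Gronwall) converts this into the stated exponential estimate, with $\tilde C$ absorbing $C_0$ and all constants depending on $\|M\|_{L^\infty}$, $T$, and the semigroup bound. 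The second conclusion then follows immediately: the hypothesis $\tilde C\le C_1/\|B_1-B_2\|_{L^{\infty}(S_{Tf};\mathcal{L}(Z))}$ converts the right-hand side into a linear dependence on $\|B_1-B_2\|_{L^{\infty}(S_{Tf};\mathcal{L}(Z))}$ after linearizing the exponential over the bounded range $\|B_1-B_2\|\le 2\sup_{\mathcal U}\|B\|$.

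The main obstacle I anticipate is the bookkeeping after the boundary-term split: one must verify that, after the change of variables $s=\zeta$ and the integral against $r(s,\tau_0)$ in \eqref{eqn 4.2}, the inhomogeneous piece $(B_1-B_2)\,p^{\beta_2}$ really factors as an $L^\infty$ operator norm in the birth perturbation times an $L^1$ quantity in $p^{\beta_2}$, rather than producing an unwanted $L^1$-in-$\beta$ pairing. Once the correct H\"older duality is pinned down at this step, the remainder is a direct replay of Theorem \ref{mt2}, and the hypothesis on $\tilde C$ converts the exponential estimate into the Lipschitz estimate in the second assertion.
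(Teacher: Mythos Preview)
Your approach is sound, but it differs from the paper's in the key splitting step and in what it actually delivers. The paper does \emph{not} use your add-and-subtract identity $B_1 p^{\beta_1}-B_2 p^{\beta_2}=(B_1-B_2)p^{\beta_2}+B_1(p^{\beta_1}-p^{\beta_2})$. Instead it invokes a comparison/ordering argument: assuming without loss of generality $\beta_2\ge\beta_1$ (so that $p^{\beta_2}\ge p^{\beta_1}$ by the monotonicity noted in \cite{Kato2016}), it bounds $\beta_1 p^{\beta_1}-\beta_2 p^{\beta_2}$ by $(\beta_1-\beta_2)(p^{\beta_1}-p^{\beta_2})$, which routes $\|B_1-B_2\|$ into the Gronwall \emph{coefficient} rather than into the inhomogeneous term. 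That is precisely what produces the exponent $T\|B_1-B_2\|$ in the first displayed estimate of the theorem, and why the paper then needs the extra hypothesis $\tilde C\le C_1/\|B_1-B_2\|$ to pass to the Lipschitz bound.

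Your decomposition instead places $\|B_1-B_2\|$ in the forcing constant $C_0$ and leaves the Gronwall coefficient depending only on $\|B_1\|$ and $\|M\|$ (both bounded on $\mathcal U$). After Lemma~\ref{l2} you therefore obtain $C_0\|B_1-B_2\|\,e^{\tilde C T}$, which is \emph{not} literally the first inequality as stated---note your exponent is $\tilde C T$, not $T\|B_1-B_2\|$---but is exactly the Lipschitz bound of the second conclusion, obtained directly and without the paper's auxiliary smallness hypothesis. So your route is the more standard one and in fact yields the stronger result; just correct the sentence ``converts this into the stated exponential estimate,'' since the form of the exponential you get differs from the one displayed in the theorem.
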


 \begin{proof} For $t \in [0,T]$ and $p^{\beta_{1}},p^{\beta_{2}} \in L^{\infty}(0,T;L^{1}(0,s_{f};Z)),$ we have

  \begin{eqnarray*}
  \| p^{\beta_{1}}(\cdot,t)-p^{\beta_{2}}(\cdot,t) \|_{L^{1}(0,s_{f};Z)} &\le & \int_{0}^{z_{0}(t)} \| Q(\tau_{0}) S(t-\tau_{0})(p^{\beta_{1}}(0,\tau_{0})-p^{\beta_{2}}(0,\tau_{0})) \|_{Z}~ ds  \\
    & + & \int_{0}^{z_{0}(t)} \int_{\tau_{0}}^{t} \| Q(\eta)S(t-\eta) M(s(\eta),\eta)(p^{\beta_{1}}-p^{\beta_{2}}) \|_{Z}~ d \eta ds \\&+& \int_{z_{0}(t)}^{s_{f}} \int_{0}^{t} \| Q(\eta)S(t-\eta) M(s(\eta),\eta)(p^{\beta_{1}}-p^{\beta_{2}}) \|_{Z}~ d \eta ds.
  \end{eqnarray*}
  Let us denote $$ I_{1} =  \int_{0}^{z_{0}(t)} \| Q(\tau_{0}) S(t-\tau_{0})(p^{\beta_{1}}(0,\tau_{0})-p^{\beta_{2}}(0,\tau_{0})) \|_{Z}~ ds  $$
  $$ I_{2} = \int_{0}^{z_{0}(t)} \int_{\tau_{0}}^{t} \| Q(\eta)S(t-\eta) M(s(\eta),\eta)(p^{\beta_{1}}-p^{\beta_{2}}) \|_{Z}~ d \eta ds $$
  $$ I_{3} = \int_{z_{0}(t)}^{s_{f}} \int_{0}^{t} \| Q(\eta)S(t-\eta) M(s(\eta),\eta)(p^{\beta_{1}}-p^{\beta_{2}}) \|_{Z}~ d \eta ds $$
  In the integral $I_{1}$, substituting $s = \zeta = \tau_{0}(t,s),$ we get
  $$I_{1} \le \int_{0}^{t} \left\| S(t-\zeta) \left( \int_{0}^{s_{f}} B_{1}(s(\eta),\eta)p^{\beta_{1}}(s(\eta),\eta) d \eta -
  \int_{0}^{s_{f}} B_{2}(s(\eta),\eta)p^{\beta_{2}}(s(\eta),\eta) d \eta \right) \right\|_{Z} d \zeta . $$
In \cite{Kato2016}, it has been shown that if $\beta_{2} \le \beta_{1}$, then $p^{\beta_{2}} \le p^{\beta_{1}}$. So, here we prove the other case. Without loss of generality assume that
$\beta_{2} \ge \beta_{1}$, then
$$\beta_{1}p^{\beta_{1}}-\beta_{2}p^{\beta_{2}} \le \beta_{1}p^{\beta_{1}}+\beta_{2}p^{\beta_{2}}-\beta_{1}p^{\beta_{2}}- \beta_{2}p^{\beta_{1}} = (\beta_{1}-\beta_{2})(p^{\beta_{1}}- p^{\beta_{2}}).$$
Therefore,
$$ I_{1} \le \tilde{C} \| {B}_{1}-{B}_{2} \|_{L^{\infty}(S_{Tf};\mathcal{L}(Z))} \int_{0}^{t} \left \| p^{\beta_{1}}(\cdot,\zeta)-p^{\beta_{2}}(\cdot,\zeta) \right \|_{L^{1}(0,s_{f};Z)} d \zeta .   $$
Similarly, using $s=\phi = \psi(\zeta;t,s)$, we can always find $\tilde{C}$ such that
$$I_{2}+I_{3} \le \tilde{C} \| {B}_{1}-{B}_{2} \|_{L^{\infty}(S_{Tf};\mathcal{L}(Z))} \int_{0}^{t} \left \| p^{\beta_{1}}(\cdot,\zeta)-p^{\beta_{2}}(\cdot,\zeta) \right \|_{L^{1}(0,s_{f};Z)} d \zeta . $$
 Therefore
 \begin{equation} \label{u4.7}
 \left \| p^{\beta_{1}}(\cdot,t)-p^{\beta_{2}}(\cdot,t) \right \|_{L^{1}(0,s_{f};Z)} \le C  + \tilde{C} \| {B}_{1}-{B}_{2} \|_{L^{\infty}(S_{Tf};\mathcal{L}(Z))} \int_{0}^{t} \left \| p^{\beta_{1}}(\cdot,\zeta)-p^{\beta_{2}}(\cdot,\zeta) \right \|_{L^{1}(0,s_{f};Z)} d \zeta,
   \end{equation}
 where $C$ is a positive constant and $\tilde{C}$ is a generic constant which depends on  $\| M \|_{L^{\infty}(S_{Tf};\mathcal{L}(Z))}, \ T$ and the bound of the semigroup $\{ S(t) , t \ge 0 \}$. Now using the Gronwall inequality, we get
 $$
  \left \| p^{\beta_{1}}(\cdot,t)-p^{\beta_{2}}(\cdot,t) \right \|_{L^{1}(0,s_{f};Z)} \le C \text{exp} \left( \tilde{C} T \| {B}_{1}-{B}_{2} \|_{L^{\infty}(S_{Tf};\mathcal{L}(Z))} \right).
$$
 Since $C$ is a positive constant, we can find another constant $C$ (just for convenience) such that
 $$ C \le C \| {B}_{1}-{B}_{2} \|_{L^{\infty}(S_{Tf};\mathcal{L}(Z))}. $$
 Using the estimate on $\tilde{C}$ and Gronwall's lemma, we get
 \begin{eqnarray*}
 \left \| p^{\beta_{1}}(\cdot,t)-p^{\beta_{2}}(\cdot,t) \right \|_{L^{1}(0,s_{f};Z)} \le C\| {B}_{1}-{B}_{2} \|_{L^{\infty}(S_{Tf};\mathcal{L}(Z))} e^{C_{1}T}.
 \end{eqnarray*}
 Taking $M_{1} = Ce^{C_{1}T}$, we obtain our desired result.
 \end{proof}

\begin{Remark}
Applying the same procedure as we followed to prove the uniqueness of mild solution to (\ref{1.1}), we can prove that the bound of mild solution depends on vital rates, inflow of zero and $s$-size individuals and the initial population distribution. So, if we assume inflow of zero and $s$-size individuals as constant and also vital rates (mortality, fertility and growth rate) as constant, then mild solution will be global mild solution in time variable.
\end{Remark}
Now, let us consider the following adjoint system to (\ref{4.3}) and (\ref{4.4}): \\
In case (A1)-(a) and (A1)-(c),
\begin{equation} \label{4.5}
\begin{cases}
D_{\psi} \phi (s,t)=-A^{*} \phi (s,t) + M^{*}(s,t) \phi (s,t) +[1- B^{*}(s,t)r(s,t) \phi (0,t)] + f^{*}(s,t) ~~ \text{a.e} ~~ (s,t) \in S_{Tf} \\
 \phi (s_{f},t)=\lim_{h \rightarrow +0} \phi(\psi(t-h;t,s_{f}),t-h)=0 ~~ \text{a.e} ~~ t \in (0,T) \\
 \phi(s,T)= \lim_{h \rightarrow +0} \phi (\psi(T-h;T,s),T-h)=0, ~~ \text{a.e} ~~ s \in (0,s_{f})
 \end{cases}
 \end{equation}
 and for case (A1)-(b) and (A1)-(d)
 \begin{equation} \label{4.6}
 \begin{cases}
 D_{\psi} \phi (s,t)=-A^{*} \phi (s,t) + M^{*}(s,t) \phi (s,t) +[1- B^{*}(s,t)r(s,t) \phi (0,t)] + f^{*}(s,t) ~~ \text{a.e} ~~ (s,t) \in S_{Tf} \\
 \phi (s_{f},t)=\lim_{h \rightarrow +0} \phi(\psi(t-h;t,s_{f}),t-h)=0 ~~ \text{a.e} ~~ t \in (0,T),

 \end{cases}
 \end{equation}
 where $\phi \in L^{\infty}(S_{Tf};D(A^{*})) \cap C_{\psi}^{1}(S_{Tf};Z^{*})$ is unknown. In adjoint system, $A^{*}$ is the adjoint operator of $A$ and will generate the adjoint semigroup $\lbrace S^{*}(t) | t \ge 0  \rbrace$ in the dual space $Z^{*}$ of $Z$, also $B^{*}$ and $M^{*}$ are adjoint operators for $B$ and $M$ respectively. $M^{*}$ and $B^{*}$ are defined by
 $$ [M^{*}(s,t) \phi](x) =\mu(s,t,x)\phi(x),~ [B^{*}(s,t) \phi](x) =\beta(s,t,x)\phi(x).  $$
 Note that
 \begin{dmath} \label{dest1}
 \| M^{*} \|_{L^{\infty}(S_{T_{f}};\mathcal{L}(Z^{*}))} \le \| \mu \|_{L^{\infty}(\Omega_{T_{s}})},~ \| B^{*} \|_{L^{\infty}(S_{T_{f}};\mathcal{L}(Z^{*}))} \le \| \beta \|_{L^{\infty}(\Omega_{T_{s}})} .
 \end{dmath}
 Now, our task is to convert the data $\phi (s_{f},t)=\phi(s,T)=0$ into the data $\phi (0,t)= \phi (s,0)=0 $ to relate system (\ref{4.5}) and (\ref{4.3}). So, the natural choice of transformation will be $ \overline{\phi}(s,t)=\phi(s_{f}-s,T-t) $. Our next job is to find the characteristic curve along which $\overline{\phi}$ satisfy an ordinary differential equation. Let $\overline{\gamma}(s,t)=\gamma(s_{f}-s,T-t)$ and $\overline{\psi}(t;t_{0},s_{0})$ be the solution to the system $$ \frac{ds(t)}{dt}=\overline{\gamma}(s(t),t), ~~ s(t_{0})=s_{0}, ~~ t \in [0,T]. $$ By following the idea of \cite{Kato2016}, one can easily show that $ D_{\overline{\psi}}\overline{\phi}(s,t)=-D_{\psi} \phi(s_{f}-s,T-t). $  Then applying the same procedure as used in Theorem \ref{mt1}, we can show the existence of solution to the dual system (\ref{4.5}) and (\ref{4.6}). \\
 Also the uniqueness of the solution to the adjoint system follows from the following estimates
 $$\| \phi(s,t) \|_{Z^{*}} \le C^{*} \int_{0}^{T} \|   B^{*}(\cdot,\zeta) r(\cdot,\zeta) \phi(0,\zeta) - r(\cdot,\zeta) \phi(0,\zeta) - f^{*}(\cdot,\zeta) \|_{L^{\infty}(0,s_{f};Z^{*})} d \zeta.$$

 \vspace{0.5cm}
 Following the same steps as followed in theorem \ref{mt3}, we will get the following estimates for dual problem:

 \begin{Theorem} \label{mt6}
 Suppose that the assumptions (A1)-(A5) hold and $\phi^{\beta^{1}}, \phi^{\beta_{2}}$ be solutions to dual problem corresponding to fertility rates $\beta_{1}, \beta_{2}$ respectively, then
\begin{dmath} \label{d4.10}
 \| \phi^{\beta_{1}}(s,t) - \phi^{\beta_{2}}(s,t) \|_{Z^{*}} \le M_{2}  \| B^{*}_{1} - B^{*}_{2} \|_{L^{\infty}(S_{T_{f}};\mathcal{L}(Z^{*}))} .
 \end{dmath}
 \end{Theorem}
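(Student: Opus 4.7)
The plan is to mirror the proof of Theorem~\ref{mt3}, replacing the forward mild-solution formula by the one for the adjoint system. First I would apply the size/time reversal $\overline\phi(s,t):=\phi(s_f-s,T-t)$ introduced just before the statement; this sends the terminal and right-boundary data $\phi(s,T)=\phi(s_f,t)=0$ to homogeneous initial and left-boundary data for $\overline\phi$, and since $D_{\overline\psi}\overline\phi(s,t)=-D_\psi\phi(s_f-s,T-t)$ the transformed equation is a forward characteristic equation to which the mild representation of Theorem~\ref{mt1} applies verbatim (with $A$ replaced by $A^*$ and $S(t)$ by the dual semigroup $S^*(t)$). I can then write $\overline\phi^{\beta_i}$ in mild form along $\overline\psi$, the only control-dependent source being the trace coupling $B_i^*(\cdot,\cdot)\,r(\cdot,\cdot)\,\overline\phi^{\beta_i}(s_f,\cdot)$.

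Next I would subtract the two representations: the contributions coming from $f^*$, from the constant ``$1$'', and from the zero initial/boundary data all cancel, so $w:=\overline\phi^{\beta_1}-\overline\phi^{\beta_2}$ is driven only by the $M^*$ and $B^*$ terms. The splitting
\[
B_1^*\,\overline\phi^{\beta_1}-B_2^*\,\overline\phi^{\beta_2}
=B_1^*\bigl(\overline\phi^{\beta_1}-\overline\phi^{\beta_2}\bigr)+(B_1^*-B_2^*)\,\overline\phi^{\beta_2}
\]
isolates exactly one factor of $B_1^*-B_2^*$, multiplied by the a priori bound on $\overline\phi^{\beta_2}$ supplied by the uniqueness estimate displayed just before Theorem~\ref{mt6}. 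The remaining summands, together with the $M^*$ term, become $\widetilde C\int_0^t\|w(\cdot,\zeta)\|\,d\zeta$ after applying the characteristic substitutions $s=\zeta$ and $s=\phi$ already used to handle $I_1,I_2,I_3$ in Theorems~\ref{mt2}--\ref{mt3}. The four sub-cases (A1)-(a)--(d) are handled identically, only the domain partition through $z_0,z_1$ changes.

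Combining these steps gives an inequality of the form
\[
\|\phi^{\beta_1}(\cdot,t)-\phi^{\beta_2}(\cdot,t)\|_{L^\infty(0,s_f;Z^*)}
\le C_0\,\|B_1^*-B_2^*\|_{L^\infty(S_{T_f};\mathcal L(Z^*))}
+\widetilde C\int_0^t\|\phi^{\beta_1}(\cdot,\zeta)-\phi^{\beta_2}(\cdot,\zeta)\|_{L^\infty(0,s_f;Z^*)}\,d\zeta,
\]
and Lemma~\ref{l2} immediately yields the claim with $M_2=C_0\exp(\widetilde C T)$. I expect the main obstacle to be the coupling term $B^*(s,t)r(s,t)\phi(0,t)$: after reversal it is an evaluation of $\overline\phi$ at the characteristic endpoint $s=s_f$ rather than an integral over $s$, so the $L^1$-trace/reproduction manipulation used in Theorem~\ref{mt3} does not apply directly; the $L^\infty$-trace control built into the uniqueness estimate for the adjoint problem preceding the theorem is the natural substitute and is what determines the constant $C_0$ above.
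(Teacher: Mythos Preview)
Your proposal is correct and follows exactly the route the paper indicates: the paper does not write out a proof of Theorem~\ref{mt6} but simply states that one obtains it by ``following the same steps as followed in Theorem~\ref{mt3}'', which is precisely the scheme you describe (reversal $\overline\phi(s,t)=\phi(s_f-s,T-t)$, mild formula along $\overline\psi$, subtraction, Gronwall). Your splitting $B_1^*\overline\phi^{\beta_1}-B_2^*\overline\phi^{\beta_2}=B_1^*(\overline\phi^{\beta_1}-\overline\phi^{\beta_2})+(B_1^*-B_2^*)\overline\phi^{\beta_2}$ is in fact cleaner than the ordering argument used inside the proof of Theorem~\ref{mt3}, and your observation that the trace coupling $B^*r\,\phi(0,t)$ forces an $L^\infty$-in-$s$ estimate rather than the $L^1$ one of Theorem~\ref{mt3} is exactly the reason the adjoint uniqueness bound displayed just before Theorem~\ref{mt6} is stated in $L^\infty(0,s_f;Z^*)$.
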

 \begin{Remark}
 Using (\ref{dest1}) estimate in theorem \ref{mt6} can be written as
 $$| \phi^{\beta_{1}}(0,t,x)-\phi^{\beta_{2}}(0,t,x) | \le M_{2} \| \beta_{1} - \beta_{2} \|_{L^{\infty}(\Omega_{T_{s}})} ~ \forall t \in (0,T),~ x \in \Omega . $$
 \end{Remark}
 Let us define the concept of \textbf{weak solution} to (\ref{1.1}) as follows \\
 In case (A1)-(a) and case (A1)-(b), a function $p \in L^{1}(S_{Tf};Z)$ is called a weak solution to (\ref{1.1}) in the following sense
 $$ \int_{0}^{T} \int_{0}^{s_{f}} \int_{\Omega} p(s,t,x)(- D_{\psi} \phi (s,t,x)- k \Delta \phi(s,t,x)+ \mu(s,t,x) \phi(s,t,x) - r(s,t,x)\beta (s,t,x) \phi(0,t,x)) dx ds dt $$
 $$ = \int_{0}^{s_{f}} \int_{\Omega} p_{0}(s,x) \phi(s,0,x) dx ds + \int_{0}^{T} \int_{\Omega} C(t,x) \phi(0,t,x) dx dt + \int_{0}^{T} \int_{0}^{s_{f}} \int_{\Omega} f(s,t,x) \phi (s,t,x) dx ds dt $$
 where $\phi$ is any absolutely continuous function along almost every characteristic line and in case (A1)-(a) satisfies
 \begin{equation} \label{4.7}
 \begin{cases}
 \phi \in L^{\infty}(\Omega_{Ts}) \\
 D_{\psi} \phi +k \Delta \phi -\mu \phi + \beta \phi(0,\cdot,\cdot) \in L^{\infty}(\Omega_{Ts}) \\
 \phi (s_{f},t)=0 ~~ a.e~ t \in (0,T) \\
 \phi(s,T)=0 ~~ a.e ~ s \in (0,s_{f}),
 \end{cases}
 \end{equation}
 and in case (A1)-(b) satisfies
 \begin{equation} \label{4.8}
 \begin{cases}
 \phi \in L^{\infty}(\Omega_{Ts}) \\
 D_{\psi} \phi +k \Delta \phi -\mu \phi + \beta \phi(0,\cdot,\cdot) \in L^{\infty}(\Omega_{Ts}) \\
 \phi(s,T)=0 ~~ a.e ~ s \in (0,s_{f}).
 \end{cases}
 \end{equation}
 Denote set of all $ \phi $ satisfying (\ref{4.7}) by $\Phi_{1}$ and satisfying (\ref{4.8}) by $\Phi_{2}$. Since $\phi$ is absolutely continuous along almost every characteristic line, $\phi (s_{f},t)$ and $\phi(s,T)$ should be understood as
 $$ \phi(s_{f},t)= \lim_{h \to +0} \phi (\psi(t-h;t,s_{f}),t-h) ~ \text{and}~ \phi(s,T)= \lim_{h \to +0} \phi (\psi (T-h;T,s),T-h).  $$
 Also in case (A1)-(c) and case (A1)-(d), a function $p \in L^{1}(S_{Tf};Z)$ is called a weak solution to (\ref{1.1}) in the following sense
 $$ \int_{0}^{T} \int_{0}^{s_{f}} \int_{\Omega} p(s,t,x)(- D_{\psi} \phi (s,t,x)- k \Delta \phi(s,t,x)+ \mu(s,t,x) \phi(s,t,x)) dx ds dt $$
 $$ = \int_{0}^{s_{f}} \int_{\Omega} p_{0}(s,x) \phi(s,0,x) dx ds  + \int_{0}^{T} \int_{0}^{s_{f}} \int_{\Omega} f(s,t,x) \phi (s,t,x) dx ds dt $$
 where $\phi$ is any absolutely continuous function along almost every characteristic line and lies in $ \Phi_{1}$ for case (A1)-(c) and in $\Phi_{2}$ for case (A1)-(d).
 \\
  Now, question arises, what guarantees us the existence of weak solutions. For this we have the following result:

 \begin{Theorem} \label{mt4}
  The mild solution $p \in
 L^{\infty}(0,T;L^{1}(0,s_{f};Z))$ of (\ref{1.1}) is also a weak solution of (\ref{1.1}).
 \end{Theorem}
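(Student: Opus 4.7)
The plan is to start from the characteristic form of the mild solution provided by Theorem \ref{mt1} and recover the weak formulation by pairing the equation against the test function $\phi$ and integrating by parts along characteristic curves. I would first work out case (A1)-(a) with $\phi \in \Phi_1$; the remaining three cases are analogous, differing only in which boundary pieces at $s=0$ or $s=s_f$ are genuinely present.

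By Theorem \ref{mt1}, $p$ satisfies $D_\psi p = A p - \partial_s \gamma \cdot p - M p + f$ a.e.\ on $S_{Tf}$, together with the renewal condition at $s=0$ and the initial condition at $t=0$. I would pair this equation with $\phi(s,t,\cdot)$ in the $Z$--$Z^{*}$ duality (realized as $\int_\Omega \cdot\,\phi\,dx$) and integrate over $(s,t)\in S_{Tf}$. Using the self-adjointness of $A = k\Delta$ under Neumann boundary conditions to move $A$ onto $\phi$, together with the product rule $D_\psi \langle p,\phi\rangle = \langle D_\psi p,\phi\rangle + \langle p, D_\psi \phi\rangle$ and the identity $D_\psi p + \partial_s \gamma \cdot p = \partial_t p + \partial_s(\gamma p)$, I would arrive at
\begin{equation*}
\int_0^T\!\!\int_0^{s_f}\!\!\int_\Omega p\bigl(-D_\psi \phi - k \Delta \phi + \mu \phi\bigr) dx\,ds\,dt = \int_0^T\!\!\int_0^{s_f}\!\!\int_\Omega f\,\phi\,dx\,ds\,dt + (\mathrm{BT}),
\end{equation*}
where $(\mathrm{BT})$ collects boundary contributions produced by integrating $\partial_t p$ in $t$ and $\partial_s(\gamma p)$ in $s$. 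The terms at $t=T$ and $s=s_f$ vanish by the definition of $\Phi_1$, leaving $\int_0^{s_f}\!\!\int_\Omega p_0(s,x)\phi(s,0,x)\,dx\,ds + \int_0^T\!\!\int_\Omega \gamma(0,t) p(0,t,x)\phi(0,t,x)\,dx\,dt$. Substituting the renewal condition $\gamma(0,t)p(0,t,x) = C(t,x) + \int_0^{s_f} r(s,t,x)\beta(s,t,x)p(s,t,x)\,ds$ into the latter integral produces the $C(t,x)\phi(0,t,x)$ term and a term $\int p\cdot r\beta\,\phi(0,\cdot,\cdot)$ which I would move back into the left-hand integrand. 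The resulting identity is precisely the definition of a weak solution.

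The main obstacle is regularity: pointwise application of $A$ requires $p(s,t)\in D(A) = \{v \in W^{2,q}(\Omega): \partial v/\partial \nu = 0\}$, which a generic mild solution is not known to satisfy, and the integration by parts in $(s,t)$ requires absolute continuity along characteristics compatible with the $L^1$-framework. I would circumvent this in the standard way: approximate the data $(p_0,f,C)$ by smooth sequences $(p_0^n,f^n,C^n)$ (with $p_0^n(s,\cdot)\in D(A)$ and $C^1$-regular in the other variables), for which the corresponding mild solution $p^n$ is classical and the calculation above is fully rigorous. To close the argument I would pass to the limit $n\to\infty$: the Lipschitz-type dependence supplied by Theorems \ref{mt2}--\ref{mt3}, together with its immediate analogs for perturbations of $f$ and $C$ proved by exactly the same characteristic integration, gives $p^n\to p$ in $L^\infty(0,T;L^1(0,s_f;Z))$; since $\phi$, $D_\psi \phi$, $\Delta\phi$, $\mu\phi$, and $r\beta\,\phi(0,\cdot,\cdot)$ all lie in $L^\infty$, both sides of the weak identity pass to the limit and yield the weak formulation for the original $p$.
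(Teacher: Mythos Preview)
Your argument is correct, but the paper takes a somewhat different route to the same identity. Instead of invoking the product rule for $D_\psi$ and then repairing the regularity gap by approximation, the paper introduces the finite-difference expression
\[
I=\int_{0}^{T-h}\!\!\int_{0}^{s_f}\!\!\int_{\Omega}\frac{1}{h}\bigl(p(\psi(t+h;t,s),t+h,x)-p(s,t,x)\bigr)\phi(s,t,x)\,dx\,ds\,dt
\]
and computes its limit as $h\to 0$ in two ways: once using Theorem~\ref{mt1} to replace the difference quotient by $D_\psi p = Ap-\partial_s\gamma\,p - Mp + f$ (and then integrating by parts in $x$), and once by the change of variables $(s,t)=(\psi(\zeta-h;\zeta,y),\zeta-h)$, which throws the difference quotient onto $\phi$ and manufactures the boundary terms at $t=0$ and $s=0$ directly. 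Equating the two limits yields the weak formulation without ever writing $\partial_t p$ or $\partial_s p$ separately, so no smoothing of the data is needed. Your approximation approach is more modular and standard, and it makes explicit where the regularity is actually spent; the paper's difference-quotient trick is shorter and works straight on the mild solution, at the cost of a slightly more delicate change of variables.
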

 \begin{proof} Let us assume that $p \in L^{\infty}(0,T;L^{1}(0,s_{f};Z))$ be a mild solution of (\ref{1.1}). Also assume that $\phi \in \Phi_{1}$ in case (A1)-(a)  and $\phi \in \Phi_{2}$ in case (A1)-(b). Also let us define the integral
\begin{equation}
I = \int_{0}^{T-h}\int_{0}^{s_{f}} \int_{\Omega} \frac{1}{h} \left( p(\psi(t+h;t,s),t+h,x) - p(\psi(t;t,s),t,x) \right) \phi(s,t,x) dx ds dt.
\end{equation}
Then as $h \to 0$, $I$ will converge to $$\int_{0}^{T}\int_{0}^{s_{f}} \int_{\Omega} D_{\psi}p(s,t,x) \phi(s,t,x) dx ds dt  $$
\begin{eqnarray}
&=& \int_{0}^{T}\int_{0}^{s_{f}} \int_{\Omega} \left( \Delta p(s,t,x)- \partial_{s}\gamma(s,t)p(s,t,x) - \mu(s,t,x)p(s,t,x) \right) \phi(s,t,x) dx ds dt \nonumber
\\ &+& \int_{0}^{T}\int_{0}^{s_{f}} \int_{\Omega}f(s,t,x)\phi(s,t,x) dx ds dt  \\
&=& \int_{0}^{T}\int_{0}^{s_{f}} \int_{\Omega} \left( \Delta \phi(s,t,x)- \partial_{s}\gamma(s,t) \phi(s,t,x) - \mu(s,t,x) \phi(s,t,x) \right) p(s,t,x) dx ds dt \nonumber
\\
\label{4.16} &+& \int_{0}^{T}\int_{0}^{s_{f}} \int_{\Omega}f(s,t,x) \phi(s,t,x) dx ds dt
\end{eqnarray}
Now, using the change of variables $(s,t) = (y,\zeta)$, where $(s,t)$ is related to $(y,\zeta)$ by the relation $(s,t) = (\psi(\zeta -h;\zeta, y),\zeta - h)$.
Then $I$ can be written as
\begin{eqnarray*}
I &=& \frac{1}{h} \int_{h}^{T} \int_{\psi(\zeta;\zeta -h,0)}^{\psi(\zeta;\zeta -h,s_{f})} \int_{\Omega}  p(y,\zeta,x) \phi(\psi(\zeta -h;\zeta,y),\zeta-h,x) \exp{\left( \int_{\zeta - h}^{\zeta} \partial_{s}\gamma(\psi(\eta;\zeta,y),\eta) d \eta \right)} dx dy d\zeta
\\ &=& \frac{1}{h} \int_{h}^{T-h} \int_{\psi(\zeta;\zeta -h,0)}^{\psi(\zeta;\zeta -h,s_{f})} \int_{\Omega}  p \left( \phi(\psi(\zeta -h;\zeta,y),\zeta-h,x) - \phi(y,\zeta,x) \right) \\
&\times & \exp{\left( \int_{\zeta - h}^{\zeta} \partial_{s}\gamma(\psi(\eta;\zeta,y),\eta) d \eta \right)} dx dy d\zeta \\
&+&  \frac{1}{h}\int_{h}^{T-h} \int_{\psi(\zeta;\zeta -h,0)}^{\psi(\zeta;\zeta -h,s_{f})} \int_{\Omega} p \phi(\psi(\zeta -h;\zeta,y),\zeta-h,x) \\
&\times & \left(\exp{\left( \int_{\zeta - h}^{\zeta} \partial_{s}\gamma(\psi(\eta;\zeta,y),\eta) d \eta \right)}-1 \right) dx dy d\zeta  \\ 
&+& \frac{1}{h} \int_{T-h}^{T} \int_{\psi(\zeta;\zeta -h,0)}^{\psi(\zeta;\zeta -h,s_{f})} \int_{\Omega} p \phi(\psi(\zeta -h;\zeta,y),\zeta-h,x) \exp{\left( \int_{\zeta - h}^{\zeta} \partial_{s}\gamma(\psi(\eta;\zeta,y),\eta) d \eta \right)} dx dy d\zeta \\
&-& \frac{1}{h} \int_{h}^{T-h} \int_{\psi(\zeta;\zeta -h,0)}^{\psi(\zeta;\zeta -h,s_{f})} \int_{\Omega} p(y,\zeta,x) \phi(y,\zeta,x) dx dy d\zeta \\
&-& \frac{1}{h} \int_{0}^{h} \int_{0}^{s_{f}} \int_{\Omega} p(y,\zeta,x) \phi(y,\zeta,x) dx dy d\zeta
\end{eqnarray*}
Now, combining first two integrals and taking other integrals separately, $I$ will converge to
\begin{eqnarray}
&& \int_{0}^{T}\int_{0}^{s_{f}} \int_{\Omega} p(s,t,x) \left( -D_{\psi} \phi(s,t,x) - \partial_{s}\gamma(s,t) \phi(s,t,x) \right) dx ds dt \nonumber
 \\
\label{4.17}
 &+& \int_{0}^{T} \int_{\Omega} \gamma(0,\zeta) p(0,\zeta,x) \phi(0,\zeta,x) dx d\zeta + \int_{0}^{s_{f}} \int_{\Omega} p_{0}(s,x)\phi(s,0,x) dx ds
\end{eqnarray}
 as $h \to 0$. Now, from (\ref{4.16}) and (\ref{4.17}) it is clear that $p$ is a weak solution. Similar arguments will work in case (A1)-(c) and case (A1)-(d).
\end{proof}

\mysection{Optimal Control Problem} In this section, we need to characterize tangent cones and normal cones. In \cite{MR1702849}, the following characterization is given for tangent cone and normal cone. \\
 The \textbf{Tangent cone}  $~\mathcal{T}_{u}(\mathcal{U})~$ to the convex set (\ref{n2.2}) has the following characterization:
$v_{1} \in \mathcal{T}_{u}(\mathcal{U}) $ iff almost everywhere on $\Omega_{T_{s}}$
 $$ v_{1}(s,t,x) \ge 0 ~~\text{if}~~ u(s,t,x) = \phi_{l}(s,t,x)  $$
 $$ v_{1}(s,t,x) \le 0 ~~\text{if}~~ u(s,t,x) = \phi_{m}(s,t,x). $$
 The \textbf{Normal cone} $~\mathcal{N}_{u}(\mathcal{U})~$ to the convex set (\ref{n2.2}) has the following characterization:
$v_{1} \in \mathcal{N}_{u}(\mathcal{U}) $ iff almost everywhere on $\Omega_{T_{s}}$
 $$ v_{1}(s,t,x) \ge 0 ~~\text{if}~~ u(s,t,x) = \phi_{m}(s,t,x)  $$
 $$ v_{1}(s,t,x) = 0 ~~\text{if}~~\phi_{l}(s,t,x) < u(s,t,x) < \phi_{m}(s,t,x) $$
 $$ v_{1}(s,t,x) \le 0 ~~\text{if}~~ u(s,t,x) = \phi_{l}(s,t,x). $$

Now, let us consider the following  optimal birth control problem \begin{equation} \label{5.11}
 \text{Minimize}~ J(\beta)= \text{Minimize} \int_{0}^{T} \int_{0}^{s_{f}} \int_{\Omega} \left[ p^{\beta}(s,t,x)- \frac{1}{2} \rho (\beta (s,t,x))^{2} \right] dx ds dt,
 \end{equation}
where $p^{\beta}(s,t,x)$ is the solution of (\ref{1.1}) corresponding to the birth function $\beta$. The term  $-\frac{1}{2} \rho (\beta)^{2}$ in the objective functional implies that if cost of birth control is higher, the fertility rate will be lower which also matches with our intuition. \\
In this section we will assume that the  assumptions (A1)-(A5) hold and also the assumptions of (Theorem \ref{mt3}) hold.
\begin{Definition}
 A pair $(\beta,p^{\beta})$ is said be optimal pair for the birth control problem (\ref{5.11}) if the following conditions are satisfied  \\
(1) $\beta \in \mathcal{U}$. \\
(2) $p^{\beta}$ solves the system(\ref{1.1}). \\
(3) $(\beta,p^{\beta})$ minimizes the functional $J(\beta)$.
\end{Definition}

Let us define the functional $\Psi$ in $\mathcal{U}$ as follows
\begin{equation} \label{5.12}
\Psi (\beta) =
\begin{cases}
\int_{0}^{T} \int_{0}^{s_{f}} \int_{\Omega} \left[ p^{\beta}(s,t,x)- \frac{1}{2} \rho (\beta (s,t,x))^{2} \right] dx ds dt, ~~ \beta \in \mathcal{U} \\
+\infty, ~~ \text{otherwise}.
\end{cases}
\end{equation}

Our first task is to prove that $\Psi$ is lower semicontinuous.
\begin{Lemma} \label{ml1}
 The functional $\Psi$ is lower semicontinuous.
 \end{Lemma}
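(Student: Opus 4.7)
The plan is to show that $\Psi$ is in fact continuous on the metric space $(\mathcal{U}, \|\cdot\|_{L^{\infty}(\Omega_{Ts})})$, which immediately implies lower semicontinuity there; since $\Psi \equiv +\infty$ outside $\mathcal{U}$ and $\mathcal{U}$ is closed in $L^{\infty}(\Omega_{Ts})$ (it is cut out by the pointwise inequalities $\phi_{l} \le u \le \phi_{m}$, which are preserved under $L^{\infty}$ limits), the lower semicontinuity extends to all of $L^{\infty}(\Omega_{Ts})$. I would therefore fix a sequence $(\beta_{n}) \subset \mathcal{U}$ with $\beta_{n} \to \beta$ in $L^{\infty}(\Omega_{Ts})$, so that $\beta \in \mathcal{U}$, and prove $\Psi(\beta_{n}) \to \Psi(\beta)$.

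I would split $\Psi(\beta_{n}) - \Psi(\beta)$ into a linear ``population'' term and a quadratic penalty term. For the population term, I would invoke Theorem \ref{mt3} applied to the mild solutions $p^{\beta_{n}}$ and $p^{\beta}$, together with the bound $\|B_{n} - B\|_{L^{\infty}(S_{T_{f}};\mathcal{L}(Z))} \le \|\beta_{n} - \beta\|_{L^{\infty}(\Omega_{Ts})}$ noted after the definitions of $M$ and $B$ in Section 2. This yields
$$\|p^{\beta_{n}}(\cdot,t) - p^{\beta}(\cdot,t)\|_{L^{1}(0,s_{f};Z)} \le M_{1}\,\|\beta_{n} - \beta\|_{L^{\infty}(\Omega_{Ts})}$$
uniformly in $t \in [0,T]$, so integrating in $t$ gives $\int_{0}^{T}\int_{0}^{s_{f}}\int_{\Omega}(p^{\beta_{n}} - p^{\beta})\,dx\,ds\,dt \to 0$. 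For the quadratic term, the uniform pointwise bound $|\beta_{n}|,|\beta| \le \|\phi_{m}\|_{L^{\infty}}$ combined with the identity $\beta_{n}^{2} - \beta^{2} = (\beta_{n} + \beta)(\beta_{n} - \beta)$ furnishes
$$\left|\int_{0}^{T}\!\!\int_{0}^{s_{f}}\!\!\int_{\Omega}(\beta_{n}^{2} - \beta^{2})\,dx\,ds\,dt\right| \le 2\,T\,s_{f}\,|\Omega|\,\|\phi_{m}\|_{L^{\infty}}\,\|\beta_{n} - \beta\|_{L^{\infty}(\Omega_{Ts})} \longrightarrow 0.$$
Summing the two bounds shows $\Psi(\beta_{n}) \to \Psi(\beta)$, which gives continuity and hence the claimed lower semicontinuity.

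The main subtlety, rather than a genuine obstacle, is ensuring that one is entitled to the Lipschitz-in-$\beta$ version of Theorem \ref{mt3} (its second conclusion) instead of only the exponential version; this is guaranteed by the standing assumption stated at the beginning of Section 5 that the hypotheses of Theorem \ref{mt3} hold on $\mathcal{U}$. Once this is in force, everything else reduces to the mild-solution estimate already proved and a direct dominated-convergence argument on the quadratic penalty, so the proof is essentially a packaging of Theorem \ref{mt3} with an elementary bound on $\beta \mapsto \int \beta^{2}$.
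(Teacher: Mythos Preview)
Your argument is correct and in fact establishes more than lower semicontinuity: you prove that $\Psi$ is \emph{continuous} on $(\mathcal{U},\|\cdot\|_{L^\infty(\Omega_{Ts})})$ by combining the Lipschitz estimate of Theorem~\ref{mt3} for the linear term with the elementary factorization $\beta_n^2-\beta^2=(\beta_n+\beta)(\beta_n-\beta)$ for the quadratic term. The paper takes a different route: it works with a sequence $\beta_n\to\beta$ in $L^1(\Omega_{Ts})$, passes to an a.e.\ convergent subsequence, argues (somewhat informally) that $p^{\beta_n}\to p^\beta$ pointwise, and then concludes via dominated convergence and Fatou's lemma. Your approach is cleaner and avoids any subsequence extraction or appeal to Fatou.

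There is, however, one point you should be aware of. The paper's proof targets lower semicontinuity with respect to the $L^1$ topology, and this is not accidental: in the next theorem Ekeland's variational principle is applied on the complete metric space $(\mathcal{U},\|\cdot\|_{L^1(\Omega_{Ts})})$, so the lower semicontinuity one ultimately needs is in the $L^1$ metric. Your argument, as written, delivers only $L^\infty$ lower semicontinuity, which is a strictly weaker statement (fewer sequences converge in $L^\infty$). The quadratic-term estimate in your proof adapts immediately to $L^1$ convergence, since $|\!\int(\beta_n^2-\beta^2)|\le 2\|\phi_m\|_{L^\infty}\|\beta_n-\beta\|_{L^1}$; but your population-term estimate relies on $\|B_n-B\|_{L^\infty}\le\|\beta_n-\beta\|_{L^\infty}$, which does not follow from $L^1$ convergence. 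To match the paper's statement and its later use, you would need either to upgrade the continuous-dependence estimate to one controlled by $\|\beta_n-\beta\|_{L^1}$, or to revert to the paper's subsequence-and-Fatou strategy for the population term.
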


\begin{proof}
 Let $\beta_{n}$ be a sequence in $L^{1}(\Omega_{Ts})$ converges to $\beta$. Without loss of generality assume that $\beta_{n} \in \mathcal{U}$. Since $p^{\beta_{n}}$ is bounded
(Theorem\ref{mt3}) and the bound depends upon $\beta_{n}$, we can find a subsequence $\beta_{n} (s,t,x) \rightarrow \beta(s,t,x)$ and $p^{\beta_{n}}(s,t,x) \rightarrow p^{\beta}(s,t,x)$ in $\Omega_{Ts}$. So, we have
$$p^{\beta_{n}}(s,t,x) + \frac{1}{2} \rho (\beta_{n}(s,t,x))^{2} \rightarrow p^{\beta}(s,t,x) +  \frac{1}{2} \rho (\beta (s,t,x))^{2}. $$
Now, using the Lebesgue dominated convergence theorem, we can conclude that
$$ \int_{0}^{T} \int_{0}^{s_{f}} \int_{\Omega} \left[ p^{\beta_{n}}(s,t,x)- \frac{1}{2} \rho (\beta_{n} (s,t,x))^{2} \right] dx ds dt  $$
converges to $$\int_{0}^{T} \int_{0}^{s_{f}} \int_{\Omega} \left[ p^{\beta}(s,t,x)- \frac{1}{2} \rho (\beta (s,t,x))^{2} \right] dx ds dt $$
Therefore, by Fatou's lemma,
\begin{dmath*}
 \liminf_{n \rightarrow \infty} \int_{0}^{T} \int_{0}^{s_{f}} \int_{\Omega} \left[ p^{\beta_{n}}(s,t,x)- \frac{1}{2} \rho (\beta_{n} (s,t,x))^{2} \right] dx ds dt \\ \ge \int_{0}^{T} \int_{0}^{s_{f}} \int_{\Omega} \left[ p^{\beta}(s,t,x)- \frac{1}{2} \rho (\beta (s,t,x))^{2} \right] dx ds dt ,
\end{dmath*} 
  which is
$$ \liminf_{n \rightarrow \infty} \Psi (\beta_{n}) \ge \Psi (\beta).$$ Therefore
$\Psi $ is lower semicontinuous.
\end{proof}

\begin{Theorem} \label{mt5}
 Let $(\beta_{*},p^{\beta_{*}})$ be optimal pair for the least cost size problem and $\phi^{\beta_{*}}$ be the solution to the adjoint system (\ref{4.5}) in case (A1)-(a), (A1)-(c) and adjoint system (\ref{4.6}) in case (A1)-(b), (A1)-(d) respectively. Then for $f^{*}(s,t,x)=-c$, where $c$ is a positive constant
$$ \beta_{*}(s,t,x) = F \left( -\frac{r(s,t,x) \phi^{\beta_{*}}(0,t,x) p^{\beta_{*}}(s,t,x)}{c \rho} \right) $$
$F \colon L^{1}(\Omega_{Ts}) \mapsto L^{\infty}(\Omega_{Ts})$ is defined by
\begin{equation}
(Fh)(s,t,x)=
\begin{cases}
\phi_{l}, ~~ \text{if}~ h(s,t,x) < \phi_{l} \\
h(s,t,x), ~~ \text{if} ~ \phi_{l} \le h(s,t,x) \le \phi_{m} \\
\phi_{m}, ~~ \text{if} ~ h(s,t,x) > \phi_{m}.
\end{cases}
\end{equation}
\end{Theorem}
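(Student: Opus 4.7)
The plan is a classical first-order variational argument built on the adjoint system (\ref{4.5})--(\ref{4.6}), finished by a pointwise analysis on the partition
\[
E_{1}=\{\beta_{*}=\phi_{l}\},\qquad E_{2}=\{\phi_{l}<\beta_{*}<\phi_{m}\},\qquad E_{3}=\{\beta_{*}=\phi_{m}\}
\]
of $\Omega_{Ts}$; this is the ``three sets'' variant of the tangent-/normal-cone technique announced in the introduction. Admissible perturbations $v\in L^{\infty}(\Omega_{Ts})$, i.e.\ those with $\beta_{*}+\epsilon v\in\mathcal{U}$ for all sufficiently small $\epsilon>0$, must satisfy $v\ge 0$ a.e.\ on $E_{1}$, are unrestricted on $E_{2}$, and must satisfy $v\le 0$ a.e.\ on $E_{3}$.

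First I would identify the state sensitivity. By Theorem \ref{mt3} applied to $\beta_{*}$ and $\beta_{*}+\epsilon v$, the difference quotient $q^{\epsilon}=\epsilon^{-1}(p^{\beta_{*}+\epsilon v}-p^{\beta_{*}})$ is uniformly bounded in $L^{\infty}(0,T;L^{1}(0,s_{f};Z))$, and passage to the limit in the mild-solution formula of Section 4 identifies $q=\lim_{\epsilon\to 0^{+}}q^{\epsilon}$ as the mild solution of the linearised system
\[
\begin{cases}
D_{\psi}q = Aq - \partial_{s}\gamma\,q - Mq,\\
\gamma(0,t)\,q(0,t,x) = \displaystyle\int_{0}^{s_{f}} r(s,t,x)\bigl[v(s,t,x)\,p^{\beta_{*}}(s,t,x)+\beta_{*}(s,t,x)\,q(s,t,x)\bigr]ds,\\
q(s,0,x) = 0.
\end{cases}
\]
The minimality of $\beta_{*}$ then yields the first-order necessary condition
\[
\int_{0}^{T}\!\int_{0}^{s_{f}}\!\int_{\Omega}\!q\,dx\,ds\,dt \;-\;\rho\!\int_{0}^{T}\!\int_{0}^{s_{f}}\!\int_{\Omega}\!\beta_{*}\,v\,dx\,ds\,dt\;\ge\;0\qquad\text{for every admissible }v.
\]

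Next I would eliminate $q$ via duality. Feeding $\phi^{\beta_{*}}$ as a test function into the weak formulation of Theorem \ref{mt4} for the linearised problem solved by $q$ and invoking the adjoint equation (\ref{4.5})--(\ref{4.6}) for $\phi^{\beta_{*}}$, the bilinear terms involving $A^{*}$, $M^{*}$, and the renewal coupling $rB^{*}\phi^{\beta_{*}}(0,\cdot)$ cancel; the zero initial trace of $q$ and the zero terminal trace of $\phi^{\beta_{*}}$ annihilate the corresponding boundary contributions; and the constant adjoint source $f^{*}=-c$ supplies the precise normalisation needed to express $\int\!\int\!\int q\,dx\,ds\,dt$ as a scalar multiple of the renewal integral $\int\!\int\!\int r\,v\,p^{\beta_{*}}\,\phi^{\beta_{*}}(0,\cdot,\cdot)\,dx\,ds\,dt$. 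Substituting back into the first-order condition reduces it to a variational inequality of the form
\[
\int_{0}^{T}\!\int_{0}^{s_{f}}\!\int_{\Omega}\!\left[\beta_{*}(s,t,x)+\frac{r(s,t,x)\,\phi^{\beta_{*}}(0,t,x)\,p^{\beta_{*}}(s,t,x)}{c\rho}\right]v(s,t,x)\,dx\,ds\,dt\;\ge\;0
\]
valid for every admissible $v$.

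Finally I localise on the three sets using test directions $v=\pm\chi_{A}$ with $A\subset E_{i}$ measurable. On $E_{2}$ both signs of $v$ are admissible, forcing the bracket to vanish and hence $\beta_{*}=-r\,\phi^{\beta_{*}}(0,\cdot,\cdot)\,p^{\beta_{*}}/(c\rho)$, a quantity automatically in $[\phi_{l},\phi_{m}]$. On $E_{1}$ only $v\ge 0$ is admissible, so the bracket must be nonnegative a.e.\ on $E_{1}$; together with $\beta_{*}=\phi_{l}$ this forces $-r\,\phi^{\beta_{*}}(0,\cdot,\cdot)\,p^{\beta_{*}}/(c\rho)\le\phi_{l}$. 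Symmetrically on $E_{3}$, where only $v\le 0$ is admissible, the bracket is nonpositive and one obtains $-r\,\phi^{\beta_{*}}(0,\cdot,\cdot)\,p^{\beta_{*}}/(c\rho)\ge\phi_{m}$. These three pointwise conditions coincide exactly with the three cases defining the projection $F$, yielding $\beta_{*}=F\!\bigl(-r\,\phi^{\beta_{*}}(0,\cdot,\cdot)\,p^{\beta_{*}}/(c\rho)\bigr)$ a.e. I expect the middle step to be the main obstacle: since $p^{\beta_{*}}$ and $q$ are only mild solutions of the nonlocal hyperbolic--diffusion system, the duality identity cannot be obtained by a direct integration by parts and must instead be assembled from the weak formulation (Theorem \ref{mt4}) together with the characteristic-curve change of variables from Theorems \ref{mt1}--\ref{mt3}, with particular care to recover the trace $\phi^{\beta_{*}}(0,t,x)$ and to track the precise constant $-1/c$ that emerges from the choice $f^{*}=-c$.
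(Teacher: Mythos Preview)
Your overall architecture---linearise the state, pair with the adjoint via the weak formulation, reduce to a pointwise variational inequality, then localise on three sets---matches the paper exactly through the duality step. The paper likewise introduces $z_{\epsilon}=\epsilon^{-1}(p^{\beta_{*}+\epsilon\delta_{*}}-p^{\beta_{*}})$ with $\delta_{*}=\beta-\beta_{*}$ for an arbitrary competitor $\beta\in\mathcal{U}$, identifies the limit $z$ as the solution of the same linearised system you wrote, and uses the weak formulation together with the adjoint equation to arrive at
\[
\int_{\Omega_{Ts}}(\beta_{*}-\beta)\Bigl(\beta_{*}+\tfrac{r\,p^{\beta_{*}}\phi^{\beta_{*}}(0,\cdot,\cdot)}{c\rho}\Bigr)\,dx\,ds\,dt\;\ge\;0\qquad\forall\,\beta\in\mathcal{U}.
\]

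Where you diverge is in the final pointwise step, and the divergence is precisely the one flagged in the introduction. You partition $\Omega_{Ts}$ by the value of the \emph{control} $\beta_{*}$ into $E_{1}=\{\beta_{*}=\phi_{l}\}$, $E_{2}=\{\phi_{l}<\beta_{*}<\phi_{m}\}$, $E_{3}=\{\beta_{*}=\phi_{m}\}$ and read off sign conditions on the bracket from the admissible directions of $v$; this is exactly the tangent--normal cone argument of Barbu--Iannelli and He--Liu that the paper says it is \emph{not} using. The paper instead partitions by the value of the \emph{expression} $h:=-r\,p^{\beta_{*}}\phi^{\beta_{*}}(0,\cdot,\cdot)/(c\rho)$ into $\mathcal{V}_{1}=\{h<\phi_{l}\}$, $\mathcal{V}_{2}=\{h>\phi_{m}\}$, $\mathcal{V}_{3}=\{\phi_{l}\le h\le\phi_{m}\}$, and on each $\mathcal{V}_{i}$ fixes a concrete competitor $\beta$ (equal to $\phi_{l}$, respectively $\phi_{m}$, on $\mathcal{V}_{i}$ and to $\beta_{*}$ elsewhere) to force the value of $\beta_{*}$ there. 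In effect you prove ``$\beta_{*}=\phi_{l}\Rightarrow h\le\phi_{l}$'' on $E_{1}$, while the paper proves ``$h<\phi_{l}\Rightarrow\beta_{*}=\phi_{l}$'' on $\mathcal{V}_{1}$; both recover $\beta_{*}=F(h)$, but the logical direction and the choice of three sets are opposite. Your route is the cleaner and more standard one; the paper's route is its advertised methodological contribution.
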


 \begin{proof}
Let $z_{\epsilon} = \frac{1}{\epsilon} [p^{\beta_{*}+\epsilon \delta_{*}} - p^{\beta_{*}} ],$ where $ \delta_{*}=\beta - \beta_{*}$, then $z_{\epsilon}$ converges to $z \in L^{\infty}(S_{Tf} ; Z)$ and $z$ satisfies \\
  \begin{equation} \label{5.2}
 \begin{cases}
  D_{\psi}z(s,t) = A z(s,t) - \partial_{s}\gamma(s,t)z(s,t) - M (s,t)z(s,t)  \quad \text{a.e} \quad (s,t) \in S_{T_{f}} \\
 \gamma(0,t)z(0,t) =  \int_{s_{i}}^{s_{f}} r(s,t)B_{*}(s,t)z(s,t)ds + \int_{0}^{s_{f}}\delta_{*} p^{\beta_{*}}(s,t) ds, \quad \text{a.e} \quad t \in (0,T) \\
 z(s,0 = 0, \quad \text{a.e} \quad s \in (0,s_{f}),
 \end{cases}
\end{equation}
where $\delta^{*}p^{\beta^{*}}(s,t)= \delta^{*}(s,t,\cdot)p^{\beta^{*}}(s,t,\cdot)$ and also $\delta^{*}p^{\beta^{*}} \in L^{\infty}(S_{Tf};Z)$.
\\
Because $z(s,t)$ is a weak solution of (\ref{5.2}), we have
\\
 $$ \int_{0}^{T} \int_{0}^{s_{f}} \int_{\Omega} z(s,t,x)(- D_{\psi} \phi (s,t,x)- k \Delta \phi(s,t,x)+ \mu(s,t,x) \phi(s,t,x) - r(s,t,x)\beta (s,t,x) \phi(0,t,x)) dx ds dt $$
$$ = \int_{0}^{T} \int_{0}^{s_{f}} \int_{\Omega} \delta_{*}(s,t,x) p^{\beta_{*}}(s,t,x) \phi(s,t,x) dx ds dt. $$
Because $\phi(s,t)$ satisfies \\
$$  D_{\psi} \phi (s,t)= - A^{*} \phi(s,t)- M^{*}(s,t) \phi(s,t) + r(s,t)B^{*} (s,t) \phi(0,t)+f^{*}(s,t)-1, $$
we have
\begin{equation} \label{5.4}
 \int_{0}^{T} \int_{0}^{s_{f}} \int_{\Omega} z(s,t,x)(f^{*}(s,t,x)-1) dx ds dt = \int_{0}^{T} \int_{0}^{s_{f}} \int_{\Omega} \delta_{*}r^{*}(s,t,x) p^{\beta_{*}}(s,t,x) \phi(0,t,x) dx ds dt.
 \end{equation}
 Because $(\beta_{*},p^{\beta_{*}})$ is optimal pair, we have
 $$ \lim_{\epsilon \to +0} \frac{1}{\epsilon}[\Psi(\beta_{*}+\epsilon \delta_{*})-\Psi(\beta_{*})] \ge 0,$$ which implies
 $$ \lim_{\epsilon \to +0} \int_{0}^{T} \int_{0}^{s_{f}} \int_{\Omega} \left[ \frac{p^{\beta_{*}+\epsilon \delta_{*}}-p^{\beta_{*}}}{\epsilon} - \frac{\rho \epsilon (\delta_{*})^{2}}{2} - \rho \beta_{*} \delta_{*}\right] dx ds dt \ge 0,$$ which further implies
 $$\int_{0}^{T} \int_{0}^{s_{f}} \int_{\Omega} \left( z(s,t,x)-\rho \beta_{*} \delta_{*} \right) dx ds dt \ge 0 .$$
 For $f^{*}(s,t,x)=-c+1$, where $c$ is a nonnegative constant not equal to $1$ and using (\ref{5.4}), we get
 $$ \int_{0}^{T} \int_{0}^{s_{f}} \int_{\Omega} (\beta_{*}-\beta) \left( \rho \beta_{*}+ \frac{r(s,t,x)p^{\beta_{*}}(s,t,x)\phi^{\beta_{*}}(0,t,x)}{c} \right) dx ds dt \ge 0,$$ which implies
  $$ \int_{0}^{T} \int_{0}^{s_{f}} \int_{\Omega} \rho (\beta_{*}-\beta) \left( \beta_{*}+ \frac{r(s,t,x)p^{\beta_{*}}(s,t,x)\phi^{\beta_{*}}(0,t,x)}{c \rho} \right) dx ds dt \ge 0 .$$
  Let $$ \mathcal{V}_{1}=\left \{ (s,t,x) \in \Omega_{Ts} ~|~ -\frac{r(s,t,x)p^{\beta_{*}}(s,t,x)\phi^{\beta_{*}}(0,t,x)}{c \rho} < \phi_{l}  \right \}. $$
  Choose $\beta = \phi_{l}$  on $\mathcal{V}_{1}$.
So, in this case  $$ \int_{\mathcal{V}_{1}} \rho (\beta_{*}-\phi_{l}) \left( \beta_{*}+ \frac{r(s,t,x)p^{\beta_{*}}(s,t,x)\phi^{\beta_{*}}(0,t,x)}{c \rho} \right) dx ds dt =0, $$
 because $\rho$ is a positive constant, we have $\beta_{*}=\phi_{l}.$ \\

Similarly, let  $$ \mathcal{V}_{2}= \left \{ (s,t,x) \in \Omega_{Ts} ~|~ -\frac{r(s,t,x)p^{\beta_{*}}(s,t,x)\phi^{\beta_{*}}(0,t,x)}{c \rho} > \phi_{m} \right \}$$ and choose $\beta = \phi_{m}$ on $\mathcal{V}_{2}$.
Then, we have $$ \int_{\mathcal{V}_{2}} \rho (\beta_{*}-\phi_{m}) \left( \beta_{*}+ \frac{r(s,t,x)p^{\beta_{*}}(s,t,x)\phi^{\beta_{*}}(0,t,x)}{c \rho} \right) dx ds dt =0.$$
Since $\rho$ is a positive constant, we have $\beta_{*}=\phi_{m}.$
\\
If $$\mathcal{V}_{3}= \left \{  (s,t,x) \in \Omega_{Ts} ~|~\phi_{l} \le -\frac{r(s,t,x)p^{\beta_{*}}(s,t,x)\phi^{\beta_{*}}(0,t,x)}{c \rho} \le \phi_{m} \right \},$$
then on $\mathcal{V}_{3}$ $$ \beta_{*}=-\frac{r(s,t,x) \phi^{\beta_{*}}(0,t,x) p^{\beta_{*}}(s,t,x)}{c \rho}. $$
\end{proof}

 \begin{Theorem}
Suppose that the assumptions $(A1)-(A5)$
hold and assumptions of theorem \ref{mt5} hold with $$\frac{M_{1}M_{4}+M_{2}M_{3}}{c \rho} <1,$$
 then the optimal control problem (\ref{5.11}) has a unique optimal control $\bar{\beta} \in \Omega_{T_{s}}$ where $M_{3}$ and $M_{4}$ are supremum of $|p|$ and $|\phi|$ respectively.
 \end{Theorem}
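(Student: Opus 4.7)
The plan is to recast the necessary optimality condition of Theorem \ref{mt5} as a fixed-point equation, prove that the associated map is a strict contraction on $\mathcal{U}$ equipped with the $L^{\infty}$ metric under the standing hypothesis $\frac{M_{1}M_{4}+M_{2}M_{3}}{c\rho}<1$, and then combine the Banach contraction principle with an Ekeland-based argument to identify its unique fixed point with the unique minimizer of $J$.

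Concretely, I would introduce the operator $\mathcal{G}\colon\mathcal{U}\to\mathcal{U}$ defined by
$$ \mathcal{G}(\beta)(s,t,x)\;=\;F\!\left(-\frac{r(s,t,x)\,\phi^{\beta}(0,t,x)\,p^{\beta}(s,t,x)}{c\rho}\right), $$
where $p^{\beta}$ solves (\ref{1.1}) and $\phi^{\beta}$ solves the corresponding adjoint system (\ref{4.5}) or (\ref{4.6}). Since the truncation $F$ takes values in $[\phi_{l},\phi_{m}]$, $\mathcal{G}$ sends $\mathcal{U}$ into itself, and $\mathcal{U}$, being a closed subset of the Banach space $L^{\infty}(\Omega_{Ts})$, is a complete metric space. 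For the contraction estimate I would exploit that $F$ is $1$-Lipschitz (being the metric projection onto an interval) and then split
$$ \phi^{\beta_{1}}(0)p^{\beta_{1}}-\phi^{\beta_{2}}(0)p^{\beta_{2}}\;=\;\phi^{\beta_{1}}(0)\bigl(p^{\beta_{1}}-p^{\beta_{2}}\bigr)+p^{\beta_{2}}\bigl(\phi^{\beta_{1}}(0)-\phi^{\beta_{2}}(0)\bigr). $$
Using assumption (A5) ($r<1$), the uniform bounds $|p^{\beta}|\le M_{3}$ and $|\phi^{\beta}|\le M_{4}$, the Lipschitz estimate of Theorem \ref{mt3} for $p^{\beta_{1}}-p^{\beta_{2}}$, and the adjoint Lipschitz estimate supplied by Theorem \ref{mt6} together with its Remark applied to $\phi(0,\cdot,\cdot)$, I obtain
$$ \|\mathcal{G}(\beta_{1})-\mathcal{G}(\beta_{2})\|_{L^{\infty}(\Omega_{Ts})}\;\le\;\frac{M_{1}M_{4}+M_{2}M_{3}}{c\rho}\,\|\beta_{1}-\beta_{2}\|_{L^{\infty}(\Omega_{Ts})}. $$
The hypothesis then makes $\mathcal{G}$ a strict contraction, and the Banach contraction principle produces a unique $\bar\beta\in\mathcal{U}$ with $\mathcal{G}(\bar\beta)=\bar\beta$.

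It remains to identify $\bar\beta$ with the unique minimizer of (\ref{5.11}). Uniqueness is immediate: Theorem \ref{mt5} forces every optimal $\beta_{*}$ to satisfy $\beta_{*}=\mathcal{G}(\beta_{*})$, so any optimal control must coincide with $\bar\beta$. For existence I would invoke Ekeland's variational principle (Theorem \ref{t1}) applied to $\Psi$, which is lower semicontinuous by Lemma \ref{ml1} and bounded below on $\mathcal{U}$ thanks to the $L^{\infty}$ bounds on $\beta$ and $p^{\beta}$. For each $\epsilon>0$ this yields $\beta_{\epsilon}\in\mathcal{U}$ with $\Psi(\beta_{\epsilon})\le\inf_{\mathcal{U}}\Psi+\epsilon$ and the variational inequality $\Psi(\beta_{\epsilon})\le\Psi(\beta)+\sqrt{\epsilon}\,\|\beta-\beta_{\epsilon}\|_{L^{1}(\Omega_{Ts})}$ for every $\beta\in\mathcal{U}$. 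Rerunning the first-variation computation of Theorem \ref{mt5} with this $\sqrt{\epsilon}$ slack produces a perturbed Euler-Lagrange inequality, which I would translate into a residual estimate of the form $\|\beta_{\epsilon}-\mathcal{G}(\beta_{\epsilon})\|_{L^{\infty}}=O(\sqrt{\epsilon})$. Coupling this residual bound with the contraction estimate forces $\beta_{\epsilon}\to\bar\beta$ in $L^{\infty}$, and then lower semicontinuity of $\Psi$ combined with $\Psi(\beta_{\epsilon})\to\inf_{\mathcal{U}}\Psi$ yields $\Psi(\bar\beta)=\inf_{\mathcal{U}}\Psi$, so $\bar\beta$ is the optimal control.

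I expect the main obstacle to be the existence step: carefully threading the Ekeland slack through the variational computation of Theorem \ref{mt5} to convert the approximate Euler-Lagrange condition into the fixed-point residual $\|\beta_{\epsilon}-\mathcal{G}(\beta_{\epsilon})\|_{L^{\infty}}=O(\sqrt{\epsilon})$, since this requires controlling the linearized state and adjoint responses uniformly under $L^{1}$ perturbations of $\beta$. A secondary technical point is norm-consistency: Theorems \ref{mt3} and \ref{mt6} are stated in $L^{1}(0,s_{f};Z)$ and $Z^{*}$ respectively, so I would either use the pointwise-in-$x$ version furnished by the Remark following Theorem \ref{mt6} (and establish an analogous pointwise bound for $p^{\beta_{1}}-p^{\beta_{2}}$ by re-running the characteristic-curve argument of Theorem \ref{mt3}), or else phrase the contraction in a mixed norm in which both estimates apply directly and the contraction constant still equals $\frac{M_{1}M_{4}+M_{2}M_{3}}{c\rho}$.
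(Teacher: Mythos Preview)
Your proposal is correct and follows essentially the same route as the paper: define the truncation map $\mathcal{G}$, show it is a contraction with constant $\frac{M_{1}M_{4}+M_{2}M_{3}}{c\rho}$ via the product splitting and Theorems \ref{mt3} and \ref{mt6}, obtain the unique fixed point $\bar\beta$, and then run Ekeland's principle to produce approximate minimizers $\beta_{\epsilon}$ that converge to $\bar\beta$. The paper carries out the step you flag as the main obstacle by passing through the tangent/normal cone description of $\mathcal{U}$: the perturbed first-order inequality is read as membership of $\rho\beta_{\epsilon}+\frac{r\,p^{\beta_{\epsilon}}\phi^{\beta_{\epsilon}}(0)}{c}+\sqrt{\epsilon}\,\theta$ in $\mathcal{N}_{\beta_{\epsilon}}(\mathcal{U})$ for some $\theta$ with $\|\theta\|_{\infty}\le 1$, which yields exactly $\beta_{\epsilon}=F\bigl(-\frac{r\,p^{\beta_{\epsilon}}\phi^{\beta_{\epsilon}}(0)}{c\rho}+\frac{\sqrt{\epsilon}\,\theta}{\rho}\bigr)$ and hence the residual bound $\|\mathcal{G}(\beta_{\epsilon})-\beta_{\epsilon}\|_{L^{\infty}}\le\sqrt{\epsilon}/\rho$ you anticipated.
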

 \begin{proof}
 We know that the functional $\Psi$ defined in (\ref{5.12}) is lower semicontinuous, so using Ekeland's variational principle, for given $\epsilon >0,$ there exists $\beta_{\epsilon} \in \mathcal{U}$ such that
 \begin{eqnarray} \label{ek2}
 \Psi({\beta_{\epsilon}}) &\le& \inf_{\beta \in \mathcal{U}} \Psi(\beta) + \epsilon
 \\
 \Psi({\beta_{\epsilon}}) &\le& \inf_{\beta \in \mathcal{U}} \{ \Psi(\beta) + \sqrt{\epsilon} \| \beta - \beta_{\epsilon} \|_{L^{1}(\Omega_{T_{s}})} \}.
 \end{eqnarray}
 Thus the perturbed functional
 \begin{eqnarray}
 \Psi_{\epsilon}(\beta) = \Psi(\beta) + \sqrt{\epsilon} \| \beta - \beta_{\epsilon} \|_{L^{1}(\Omega_{T_{s}})}
 \end{eqnarray}
 attain its infimum at $\beta_{\epsilon}$.
 Therefore,
 $$ \lim_{\epsilon ' \to 0} \frac{1}{\epsilon '} \left[ \Psi_{\epsilon}(\beta_{\epsilon}+ \epsilon' \delta_{*}) - \Psi_{\epsilon}(\beta_{\epsilon}) \right] \ge 0,  $$ which implies
 $$ \lim_{\epsilon ' \to 0} \frac{1}{\epsilon '} \left[ \Psi(\beta_{\epsilon}+ \epsilon' \delta_{*}) + \sqrt{\epsilon} \| \epsilon' \delta_{*} \| -\Psi(\beta_{\epsilon} \right] \ge 0.$$
 Following the same steps as in theorem \ref{mt5}, we get
 \begin{equation} \label{ek1}
 \int_{0}^{T} \int_{0}^{s_{f}} \int_{\Omega} \rho \delta_{*} \left( \beta_{\epsilon}+ \frac{r(s,t,x)p^{\beta_{\epsilon}}(s,t,x)\phi^{\beta_{\epsilon}}(0,t,x)}{c \rho} \right) dx ds dt + \sqrt{\epsilon} \int_{0}^{T} \int_{0}^{s_{f}} \int_{\Omega} | \delta_{*} | dx ds dt \ge 0.
 \end{equation}
 We know that $v_{1}$ lies in tangent cone $\mathcal{T}_{u}(\mathcal{U})$ iff almost everywhere on $\Omega_{T_{s}}$
 $$ v_{1}(s,t,x) \ge 0 ~\text{if}~ u(s,t,x) = \phi_{l}(s,t,x)  $$
 $$ v_{1}(s,t,x) \le 0 ~\text{if}~ u(s,t,x) = \phi_{m}(s,t,x). $$
 Thus, $\delta_{*} \in \mathcal{T}_{u}(\mathcal{U})$ and also $\delta_{*} = \beta - \beta_{\epsilon}$ depends on $\beta$ which lies in $\mathcal{U}$, which means (\ref{ek1}) holds for any $\delta_{*} \in \mathcal{T}_{u}(\mathcal{U})$.
 Therefore using the structure of normal cones, there exists $\theta(s,t,x) \in L^{\infty}(\Omega_{T_{s}}), \ \| \theta \|_{\infty} \le 1~$ such that
 $$ \rho \beta_{\epsilon}+ \frac{r(s,t,x)p^{\beta_{\epsilon}}(s,t,x)\phi^{\beta_{\epsilon}}(0,t,x)}{c} + \sqrt{\epsilon} \theta \in \mathcal{N}_{\beta_{\epsilon}}(\mathcal{U}).$$
 Therefore
 $$
 \beta_{\epsilon}(s,t,x) = F \left( -\frac{r(s,t,x)p^{\beta_{\epsilon}}(s,t,x)\phi^{\beta_{\epsilon}}(0,t,x)}{c \rho} + \frac{\sqrt{\epsilon} \theta}{\rho}  \right).
 $$
 Now, we show the uniqueness of optimal control. Define
 $$ G \colon \mathcal{U} \subset L^{\infty}(\Omega_{T_{s}}) \mapsto \mathcal{U}  $$ by
  $$(G \beta)(s,t,x) =  F \left( -\frac{r(s,t,x)p^{\beta}(s,t,x)\phi^{\beta}(0,t,x)}{c \rho} + \frac{\sqrt{\epsilon} \theta}{\rho}  \right). $$
 Then for any $(s,t,x) \in \Omega_{T_{s}}$, we have
\begin{eqnarray*}
 && |(G \beta_{1})(s,t,x) - (G \beta_{2})(s,t,x) | \\ &=&  F \left( -\frac{r(s,t,x)p^{\beta_{1}}(s,t,x)\phi^{\beta_{1}}(0,t,x)}{c \rho} +\frac{\sqrt{\epsilon} \theta}{\rho} \right) - F \left( -\frac{r(s,t,x)p^{\beta_{2}}(s,t,x)\phi^{\beta_{2}}(0,t,x)}{c \rho} + \frac{\sqrt{\epsilon} \theta}{\rho} \right)
  \\ & \le & \left| \frac{r(s,t,x)}{c \rho} \right| \left| \left( p^{\beta_{1}}(s,t,x) \phi^{\beta_{1}}(0,t,x)- p^{\beta_{2}}(s,t,x) \phi^{\beta_{2}}(0,t,x)\right) \right| \\
 &\le & \left| \frac{r(s,t,x)}{c \rho} \right|  \left( | p^{\beta_{1}}(s,t,x)-p^{\beta_{2}}(s,t,x) | | \phi^{\beta_{1}}(0,t,x) |+| p^{\beta_{2}}(s,t,x)| | \phi^{\beta_{1}}(0,t,x)-\phi^{\beta_{2}}(0,t,x)|\right) \\
 &\le & \frac{1}{c \rho} (M_{1}M_{4}+M_{2}M_{3}) \| \beta_{1}- \beta_{2} \|_{L^{\infty}(\Omega_{T_{s}})},
 \end{eqnarray*}
 because
 $$\frac{1}{c \rho} (M_{1}M_{4}+M_{2}M_{3}) < 1.$$
 Hence $G$ is a contraction mapping, so have a unique fixed point $\bar{\beta} \in \mathcal{U}$. Optimality condition in theorem \ref{mt5} gives us the uniqueness of the optimal control.  Now, we prove the existence of optimal controller. Let
 \begin{eqnarray}
 \Psi(\bar{\beta}) = \inf \left\{ \Psi(\beta) : \beta \in \mathcal{U} \right\}.
 \end{eqnarray}
 Computing the norm, we obtain
 \begin{eqnarray*}
 && \| G\beta_{\epsilon} - \beta_{\epsilon} \|_{L^{\infty}(\Omega_{T_{s}})} \\ && =  F \left( -\frac{r(s,t,x)p^{\beta_{\epsilon}}(s,t,x)\phi^{\beta_{\epsilon}}(0,t,x)}{c \rho} + \frac{\sqrt{\epsilon} \theta}{\rho}  \right)-F \left( -\frac{r(s,t,x)p^{\beta_{\epsilon}}(s,t,x)\phi^{\beta_{\epsilon}}(0,t,x)}{c \rho} \right)  \\ && \le \frac{\sqrt{\epsilon}}{\rho}.
 \end{eqnarray*}
 Therefore
 \begin{eqnarray*}
 \| \bar{\beta} - \beta_{\epsilon} \|_{L^{\infty}(\Omega_{T_{s}})} &=& \| G\bar{\beta} - \beta_{\epsilon} \|_{L^{\infty}(\Omega_{T_{s}})} \\
&\le & \| G\bar{\beta} - G\beta_{\epsilon} \|_{L^{\infty}(\Omega_{T_{s}})} + \|  G \beta_{\epsilon} -\beta_{\epsilon} \|_{L^{\infty}(\Omega_{T_{s}})} \\
&\le & \frac{1}{c \rho} (M_{1}M_{4}+M_{2}M_{3}) \| \bar{\beta} - \beta_{\epsilon} \|_{L^{\infty}(\Omega_{T_{s}})} + \frac{\sqrt{\epsilon}}{\rho}.
 \end{eqnarray*}
Which further gives
 $$
  \| \bar{\beta} - \beta_{\epsilon} \|_{L^{\infty}(\Omega_{T_{s}})} \le \frac{1}{\rho} \left( 1 -  \frac{1}{c \rho} (M_{1}M_{4}+M_{2}M_{3}) \right)^{-1} \sqrt{\epsilon},
$$
 which implies $\beta_{\epsilon} \to \bar{\beta} ~ \text{in}~ L^{\infty}(\Omega_{T_{s}}) ~ \text{as}~ \epsilon \to 0^{+}. $
 Hence by (\ref{ek2}), we have
 $$ \Psi(\bar{\beta}) = \inf_{\beta \in \mathcal{U}} \Psi(\beta). $$
 \end{proof}

\vspace{.5cm}


\bibliographystyle{amsplain}	

\end{document}